\newtheorem{proposition}{Proposition}
\newtheorem{definition}[proposition]{Definition}
\newtheorem{theorem}[proposition]{Theorem}
\newtheorem{corollary}[proposition]{Corollary}
\newcommand{\C}{\mathbb{C}}
\DeclareMathOperator{\glob}{glob}
\DeclareMathOperator{\Val}{Val}
\DeclareMathOperator{\V}{\mathcal V}
\newcommand{\CPnlam}{\C \mathrm P_\lambda^n}
\DeclareMathOperator{\CP}{\C P}
\newcommand{\CPn}{\CP^n}
\DeclareMathOperator{\Sym}{\operatorname {Sym}}
\DeclareMathOperator{\id}{id}
\newcommand\vol{\operatorname{vol}}
\newcommand \valun{\Val^{U(n)}}
\newcommand{\curvinfty}{\mathcal {C}}
\newcommand{\Cn}{\mathbb{C}^n}
\newcommand{\curv}{\mathrm{Curv}}
\newcommand{\curvun}{{\mathrm{Curv}}^{U(n)}}
\newcommand{\Area}{\mathrm{Area}}
\newcommand{\areaun}{{\mathrm{Area}}^{U(n)}}
\newcommand{\val}{\mathrm{Val}}
\newcommand{\R}{\mathbb{R}}
\newcommand{\algint}[2]{#1 \cdot #2}
\newcommand{\cf}{\kappa}
\newcommand{\cm}{\boldsymbol{b}}
\title{Contact measures in isotropic spaces}
\author{Gil Solanes}
\email{solanes@mat.uab.cat}
\address{Departament de Matem\`atiques\\ Universitat Aut\`onoma de Barcelona\\08193 Bellaterra\\Catalonia\\ Spain.}
\thanks{The author is a Serra H\'unter Fellow and was partially supported by FEDER-MINECO grant MTM2012-34834.}
\begin{document}
\begin{abstract}
We revisit the contact measures introduced by Firey, and further developed by Schneider and Teufel, from the perspective of the theory of valuations on manifolds. This reveals a link between the kinematic formulas for area measures studied by Wannerer and the integral geometry of curved isotropic spaces. As an application we find explicitly the kinematic formula for the surface area measure in Hermitian space.
\end{abstract}
\maketitle

\section{Introduction}
The principal kinematic formula of Blaschke, Santaló and Chern states
\begin{equation}\label{pkf}
 \int_{\overline{SO(n)}}\chi(A\cap gB)dg=\omega_{n}^{-1}\sum_{i=0}^n{n\choose i}^{-1}\omega_i\omega_{n-i}\mu_i(A)\mu_{n-i}(B),
\end{equation}
where  $\chi$ is the Euler characteristic, and $A,B\subset\mathbb R^n$ are sufficiently nice  compact subsets of $\mathbb R^n$ (e.g. convex bodies or smooth submanifolds). The integral over the group of rigid motions $\overline{SO(n)}=SO(n)\ltimes\mathbb R^n$ is performed with respect to the Haar measure $dg$, suitably normalized. On the right hand side, the constants $\omega_i$ denote the volume of the unit $i$-dimensional ball, and the functionals  $\mu_i$ are the so-called {\em intrinsic volumes} (also known as  quermassintegrals or Lipschitz-Killing curvature integrals). On the space of convex bodies $\mathcal K^n$, intrinsic volumes  are {\em valuations}:  a (real-valued) valuation is a functional $\varphi:\mathcal K^n\to\R$ such that
\[
 \varphi(K\cup L)=\varphi(K)+\varphi(L)-\varphi(K\cap L),
\]whenever $K,L,K\cap L\in \mathcal K^n$.
Hadwiger's characterization theorem states that the space of continuous $\overline{SO(n)}$-invariant valuations is spanned by $\mu_0,\ldots, \mu_n$. In particular, this fundamental result yields a simple proof of the principal kinematic formula for convex bodies.

Recent results by S. Alesker (cf. e.g. \cite{alesker_rotation,alesker_solution,alesker_hard}) have allowed important progress in the theory of valuations. This includes, for instance, classification results and kinematic formulas for tensor-valued valuations (cf. e.g. \cite{ bernig_hug, hug_schneider_schuster_a, hug_schneider_schuster_b}) and also for valuations taking values on the space of convex bodies (cf. e.g. \cite{abardia12, abardia_bernig, haberl10, ludwig_2005, schuster10, schuster_wannerer}). 

Another line of research is the determination of kinematic formulas with respect to different groups.
Indeed, Alesker has shown that characterization theorems in the style of Hadwiger's exist also when the group $SO(n)$ is replaced by any compact group $H$ acting transitively on the unit sphere. More precisely it was proved  in \cite{alesker_advances00} that the space $\Val^H$ of continuous, translation-invariant and $H$-invariant valuations has finite dimension. The connected groups $H$ satisfying the previous condition were classified in \cite{borel,montgomery_samelson}. For some of them, namely $H=SO(n),U(n), SU(n), Sp(2)Sp(1), G_2,Spin(7)$, a basis of the space $\Val^H$ has been constructed (cf. \cite{alesker_hard, bernig_sun,bernig_exceptional, bernig_solanes_h2}). For the rest of groups, namely $H=Sp(n),Sp(n)U(1),Sp(n)Sp(1),Spin(9)$, only the dimension of $\Val^H$ is known  (cf. \cite{bernig_hn,bernig_voide}). Let us mention that the case of non-compact isotropy groups is also interesting and has been studied (cf. \cite{alesker_faifman, bernig_faifman,ludwig_reitzner}).

As in the classical case, the fact that $\Val^H$ has finite dimension for any compact group $H$ acting transitively on the unit sphere implies the existence of kinematic formulas in the style of \eqref{pkf} with respect to $H$. The actual computation of these formulas is a difficult problem, which has been recently solved in all cases where a basis of $\Val^H$ is known \cite{bernig_sun,bernig_exceptional,bernig_hn,hig,bernig_solanes_kin}. This was possible thanks to an algebraic approach developed by Bernig and Fu \cite{hig} and based on the product of valuations discovered by Alesker \cite{alesker_product}.

\bigskip More recently, Alesker developed a theory of valuations on smooth manifolds  (cf. \cite{ale} and the references therein). By a previous result of Fu \cite{fu.indiana}, it turns out that kinematic formulas, expressible in terms of such valuations, exist in any Riemannian isotropic space. Such a space is a Riemannian manifold $M$ together with a group  $G$ acting on $M$ by isometries, and such that the induced action on the sphere bundle $SM$ is transitive. The precise statement is the following: given a basis $\varphi_1,\ldots,\varphi_d$ of the space  $\mathcal V(M)^G$ of $G$-invariant valuations on $M$ (cf. Definition \ref{defval}), there exists $$k(\chi)=\sum_{ij} c_{ij}\varphi_i\otimes\varphi_j\in \mathcal V(M)^G\otimes \mathcal V(M)^G $$ such that
\[
\int_G \chi(A\cap gB)dg=\sum_{i,j} c_{ij}\varphi_i(A)\varphi_j(B)= k(\chi)(A,B).
\]
In fact there exist analogous formulas $k(\varphi)$ for any $G$-invariant valuation $\varphi$, which yields a map $k:\mathcal V^G(M)\rightarrow \mathcal V^G(M)\otimes \mathcal V^G(M)$. Furthermore, there are local versions of the notion of valuation called {\em curvature measures}, which assign, to each sufficiently nice compact set,   a signed Borel measure supported on that set (cf. Definition \ref{defval}). Also at this local level, kinematic formulas exist in any Riemannian isotropic space by the results of \cite{fu.indiana}. These local kinematic formulas can be encoded by a linear map $K:\mathcal C(M)^G\rightarrow \mathcal C(M)^G\otimes \mathcal C(M)^G$, where $\mathcal C(M)^G$ stands for the space of $G$-invariant curvature measures of $M$.

The complete list of Riemannian isotropic spaces $(M,G)$ was obtained by Tits \cite[\S 4D]{tits}. If $G$ is connected, it is the following: 
first one has $M=V$,  a Euclidean affine space, and $G=H\ltimes V$, where $H\leq SO(V) $ is a subgroup acting transitively on the unit sphere $S(V)$. In the next cases, $M$ is a rank one symmetric space, and $G$ is the identity component of the isometry group. Finally, there are some exceptional spaces, namely $M=\mathrm S^6$ or $\mathbb R\mathrm P^6$ with $G=G_2$, and $M=\mathrm S^7$ or $\mathbb R\mathrm P^7$ with $G= Spin(7)$.
Except for the classical case of the real space forms $M=\mathbb R\mathrm P^n, \mathrm S^n,\mathrm H^n$, the determination of the principal kinematic formulas in curved isotropic spaces is a challenge. Only very recently it has been possible to compute $k(\chi)$ in the complex space forms $\CPn,\C\mathrm H^n$ (cf. \cite{bfs}). The approach was again algebraic and made use of a module structure on the space of curvature measures over the algebra of valuations. Another important tool was the so-called transfer principle which roughly states that the local kinematic formulas are the same on all spaces with the same isotropy group.

\bigskip
In the flat case, there is a second type of kinematic formula which involves the Minkowski addition instead of the intersection of sets. In this case, integration is performed over the isotropy group. For instance, for $K,L$ convex bodies in Euclidean space
\[
\int_{SO(n)}\vol(K+hL)dh=\omega_{n}^{-1}\sum_{i=0}^n{n\choose i}^{-1}\omega_i\omega_{n-i}\mu_i(K)\mu_{n-i}(L).
\]

These additive kinematic formulas admit a localization in terms of {\em area measures}, which attach a signed measure on the sphere to each convex body (cf. Definition \ref{defarea}). This was proved by Schneider \cite{schneider} for the group $SO(n)$, and more recently by Wannerer \cite{wannerer.aim} for any compact group $H$ acting transitively on the unit sphere $\mathrm S^{n-1}$.  For instance, for the classical surface area measure $S_{n-1}$, there exist constants $c_{ij}$ such that, for every pair $K,L$ of  convex bodies, and for any Borel domains $U,V\subset \mathrm S^{n-1}$, one has
\[
 A(S_{n-1})(K,U,L,V):=\int_{H} S_{n-1}(K+hL,U\cap hV)dh=\sum_{i,j}c_{ij}\Phi_i(K,U)\Phi_j(L,V),
\]
where $\Phi_0,\ldots, \Phi_d$ is a basis of the space $\mathrm{Area}^H$ of $H$-invariant area measures; i.e. $A(S_{n-1})\in \mathrm{Area}^H\otimes \mathrm{Area}^H$. In the classical case $H=SO(n)$ this formula was explicitly obtained by Schneider \cite{schneider}. In the unitary case $H=U(n)$ all local additive kinematic formulas, encoded by a map $A\colon \mathrm{Area}^H\rightarrow \mathrm{Area}^H\otimes \mathrm{Area}^H$, have been obtained by Wannerer  (cf. \cite{wannerer.aim}). This difficult task was accomplished thanks to a new approach based on tensor valued valuations.  For the other groups acting transitively on spheres these formulas are still to be found.

\bigskip
Our main result is the following relation between the principal kinematic formula $k(\chi)$ in a curved isotropic space $(M,G)$ and the local additive kinematic formula $A(S_{n-1})$ in the affine isotropic space  with the same isotropy group. The precise definition of the maps $\glob, \delta, \rho$ will be given in Section 2.

\begin{theorem}\label{mainthm}Let $(M,G)$ be a Riemannian isotropic space with isotropy group $H$, and let $(F,H\ltimes F)$ denote an affine isotropic space with the same isotropy group. Let $\delta:\V(M)\rightarrow\curvinfty(M)$ be the first variation map introduced in \cite{hig}, and let $\glob:\curvinfty(M)\rightarrow \V(M)$ be the globalization map obtained by evaluation of measures on the whole space. There is a natural inclusion $\rho\colon \mathrm{Area}^H\rightarrow \mathcal C(M)^G$ of the space of invariant area measures of $F$ into the space of invariant curvature measures of $M$, such that
\begin{equation}\label{main}
(\delta\otimes \id) k(\chi)=(\rho\otimes(\glob\circ \rho\circ r)) A(S_{n-1}),
\end{equation}
where $r$ is the involution of $\mathrm{Area}^H$ given by $r(\Phi)(K,U)=\Phi(-K,-U)$.
\end{theorem}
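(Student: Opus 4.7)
The plan is to evaluate both sides of \eqref{main} as bilinear pairings on pairs $(A,B)$ of sufficiently regular compact sets in $M$ and to interpret each side as a signed measure on the conormal bundle $\Nor(A)$. The identity will then follow from a contact-measure interpretation, combined with the transfer principle.

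For the LHS, unpacking $k(\chi)$ and $\delta$, the expression $(\delta\otimes\id) k(\chi)(A,B)$ is the first variation in the first slot of the kinematic integral $\int_G \chi(A\cap gB)\,dg$. The crucial point is that $\chi(A\cap gB)$ is locally constant under transversal perturbations of $A$, so the first variation is a distribution concentrated at those $g\in G$ where $\partial A$ is tangent to $\partial(gB)$, that is, at contact configurations. Localizing each such contribution to $\Nor(A)$ produces, up to normalization, a Firey--Schneider--Teufel contact measure.

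For the RHS, using $A(S_{n-1})(K,U,L,V)=\int_H S_{n-1}(K+hL, U\cap hV)\,dh$ together with the definitions of $\rho$, $\glob$, and $r$, the expression $(\rho\otimes(\glob\circ\rho\circ r)) A(S_{n-1})(A,B)$ becomes the curvature measure on $\Nor(A)$ corresponding to the integral $\int_H S_{n-1}(A+h(-B),\cdot)\,dh$ localized in the first variable. The antipodal involution $r$ and the sign change on $B$ appear naturally, since contact between $A$ and $gB$ at a point $p$ forces the outward normals of $A$ and $gB$ at $p$ to be antipodal, which corresponds precisely to a boundary configuration of $A+g(-B)$ with parallel outward normals.

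The theorem thus reduces to the statement that the contact measure associated with $(A,B)$ equals the localized surface area measure of the Minkowski sums $A+h(-B)$ integrated against $h\in H$. In the flat case this is essentially the Firey--Schneider--Teufel identity. The passage to the curved setting is handled by the transfer principle: both sides of \eqref{main} are curvature measures on $\Nor(A)$ whose construction depends only on the isotropy representation of $H$, and the globalization $\glob$ applied to the second factor is the compensating step that absorbs the genuinely curved information of $(M,G)$. The main difficulty will be making this transfer rigorous when one tensor factor is a valuation rather than a curvature measure, and carefully matching the Haar-measure normalizations on $G$ and $H\ltimes F$ through the map $\glob\circ\rho\circ r$.
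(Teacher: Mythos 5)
Your plan correctly identifies the three conceptual pillars that the paper also uses: (i) reinterpreting the left-hand side as a Firey--Schneider--Teufel contact measure, (ii) Schneider's flat identity relating contact measures to $A(S_{n-1})$, and (iii) a transfer principle to pass from the affine case to the curved one. So the broad route is the right one. However, you have misplaced the role of $\glob$ and, in doing so, identified a ``main difficulty'' that does not actually exist while glossing over the step that does require work.

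In the paper's argument, the globalization map enters \emph{before} any transfer occurs: the first ingredient is the identity
\[
(\delta\otimes\id)\,k(\chi)=(\id\otimes\glob)\,\cm
\]
for the contact measure $\cm\in\mathcal C(M)^G\otimes\mathcal C(M)^G$ of the curved space itself (Corollary \ref{meaning}); this is established by an intersection-theoretic fiber-integration argument (Proposition \ref{intersection}) and is where $\glob$ originates. The transfer then happens entirely at the level of curvature measures: one proves $\cm=(\tau\otimes\tau)\cm_0$ (Proposition \ref{trans}), which requires the explicit pointwise formula \eqref{explicit} for the kinematic form $\cf$ showing that it is built purely from $K$-invariant data in $\Lambda^*(F^*\oplus(\xi^\perp)^*)$. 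Your phrasing that ``the globalization $\glob$ applied to the second factor is the compensating step that absorbs the genuinely curved information'' inverts this: $\glob$ has nothing to do with curvature, and the transfer does not need to be made sense of ``when one tensor factor is a valuation'' because it is applied to $\cm\in\mathcal C\otimes\mathcal C$ and only afterward is $\glob$ (and $\rho=\tau\circ i$) applied. What actually demands care, and what your sketch takes for granted, is proving that the contact measure form $\cf$ (and hence $\cm$) is fully determined by the isotropy representation; the paper does this by the explicit computation in Proposition \ref{fiber_integration}, and without some version of that computation your invocation of ``depends only on the isotropy representation of $H$'' is an assertion, not a proof. Likewise, the passage from your heuristic (``the first variation is concentrated at contact configurations'') to the precise equality \eqref{variation} requires the Legendrian/coarea argument of Proposition \ref{intersection} and the orientation bookkeeping there; this is where the relevant analytic content sits.

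So: right skeleton, but the proposal needs to (a) correct the logical placement of $\glob$, (b) supply a genuine argument for the transfer of $\cm$ (the explicit formula for $\cf$ or an equivalent), and (c) replace the heuristic localization-at-tangency argument for the left-hand side with an actual identity of the form \eqref{variation}. The ``difficulty'' you flagged about transferring a mixed valuation/curvature-measure tensor is not present in the correct ordering of the steps.
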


We believe that equation \eqref{main} will be useful in the determination of kinematic formulas of curved spaces. Indeed, after \cite{hig} and \cite{wannerer.aim} there are many tools available for the computation of kinematic formulas in affine spaces. Hence, equation \eqref{main} could in principle be used to solve for $k(\chi)$ after computing $A(S_{n-1})$. 

On the other hand, one can also use \eqref{main} in the opposite direction; i.e. to deduce $A(S_{n-1})$ from the knowledge of $k(\chi)$. Indeed, in Section \ref{Hermitian} we use the results of \cite{bfs} to compute the local additive kinematic formula for the surface area measure in the complex space $\C^n$ under the action of $U(n)$ (see Theorem \ref{app}). An algorithm to find this formula (among many others) was already given in \cite{wannerer.aim}, but our results are more explicit.  Another approach to these formulas, giving also explicit results, has been recently developed by Bernig in \cite{bernig_dual}.

The proof of Theorem \ref{mainthm} is based on the notion of contact measure, which was introduced by Firey  \cite{firey} for convex sets in Euclidean space, and was extended by Teufel to homogeneous spaces   \cite{teufel}. The geometric idea behind this notion is to measure the set of positions in which a moving body touches another one tangentially. Here we give a new construction of contact measures which is equivalent to the classical ones, but is better adapted to the language of valuations. Then, we show a transfer principle which relates the contact measures of different isotropic spaces with the same isotropy group. Together with a classical result of Schneider on contact measures in flat spaces (cf. \cite{schneider}), this yields our main result.

\medskip\noindent\textbf{Acknowledgements.} It is a pleasure to thank J. Abardia, A. Bernig, M. Saienko, E. Teufel and T. Wannerer for useful comments on an earlier version of this paper.

\section{Background}
\subsection{Valuations, curvature measures and area measures}
It is known that kinematic formulas apply to many different classes of sets, including sets of positive reach, subanalytic sets, or the recently studied WDC sets (cf. \cite{wdc}). For simplicity we will work here with the class of {\em simple smooth polyhedra} also called {\em submanifolds with corners} (see \cite[Definition 3.1]{fu.intersection}). We  denote by $\mathcal P(M)$ the class of compact simple smooth polyhedra in a manifold $M$. Each  $A\in \mathcal P(M)$ admits a normal cycle $N(A)$ which consists of the set of outer normal vectors of $A$. As a subset of the sphere bundle $SM$ of unit tangent vectors to $M$, the normal cycle is a finite union of relatively compact smooth submanifolds and defines a closed Legendrian current. 

\begin{definition}\label{defval}
 A {\em (smooth) valuation}  on a Riemannian manifold $M$ is a functional $\mu\colon \mathcal P(M)\rightarrow \R$ of the form
\[
 \mu(A)=\int_{N(A)} \omega +\int_A \eta
\]
where $\omega\in\Omega^{n-1}(SM), \eta\in \Omega^n(M)$  are fixed differential forms. We denote by $\V(M)$ the space of valuations of $M$. We also denote by $\val(F)$ the space of smooth translation-invariant valuations on an affine space $F$.

 A {\em (smooth) curvature measure}   on $M$ is a functional that assigns, to each $A\in\mathcal P(M)$ a signed Borel measure $\Phi(A,\cdot)$ of the form
\[
 \Phi(A,U)=\int_{N(A)\cap \pi^{-1}U} \omega +\int_{A\cap U} \eta,\qquad U\subset M,
\]
where $\omega\in\Omega^{n-1}(SM), \eta\in \Omega^n(M)$  are fixed, and $\pi\colon SM\to M$ is the bundle projection. We denote by $\curvinfty(M)$ the space of curvature measures of $M$. We also denote by $\curv(F)$ or just by $\curv$ the space of translation-invariant curvature measures on an affine space $F$.
\end{definition}
There is an obvious map $\glob:\mathcal V(M)\rightarrow \mathcal C(M)$ called {\em globalization map}, which is given by $\glob(\Phi)(A)=\Phi(A,M)$.

\medskip When the ambient manifold is  a Euclidean affine space $F$,  the sphere bundle $SF$ is the cartesian product  of $F$ with the unit sphere $S(F)$. Then, it makes sense to localize valuations using area measures instead of curvature measures. In this setting it is natural to assume translation invariance, and to consider the space $\mathcal K(F)$ of convex bodies instead of $\mathcal P(F)$.
\begin{definition}\label{defarea} An {\em area  measure}  on an $n$-dimensional Euclidean affine space  $F$ assigns, to each $K\in\mathcal K(F)$, a signed Borel measure $\Psi(K,\cdot)$ on the unit sphere $S(F)$ of the form
\[
 \Psi(K,U)=\int_{N(K)\cap \pi_S^{-1}U} \omega ,\qquad U\subset S(F), 
\]
where $\omega\in\Omega^{n-1}(SF)$ is a fixed translation-invariant differential form, and $\pi_S$ is the projection of $SF=F\times S(F)$ on the second factor. We denote by $\Area(F)$ or just by $\Area$  the space of area measures of $F$.
\end{definition}

A classical example of   an area measure is the so-called {\em surface area measure} $S_{n-1}$ which corresponds to $\omega\in \Omega^{n-1}(SF)$ given by
$$\omega_{\xi}=\iota_{\pi_S(\xi)}\pi^*(d\vol),\qquad \xi\in SF,$$ 
where $d\vol$ stands for the volume element of $F$.

There is a well defined injection $i\colon \Area(F)\rightarrow \curv(F)$ such that, for any $K\in\mathcal K(F)$ with smooth positively curved boundary,
\[
 i(\Psi)(K,U)=\Psi(K,\gamma_K(U\cap \partial K)),\qquad U\subset F,
\]
where $\gamma_K:\partial K\rightarrow S(F)$ is the Gauss map of $\partial K$ (cf. \cite{wannerer.aim}). Moreover, if $H$ is a linear group acting transitively on $S(F)$, and $G=H\ltimes F$, then $i\colon \Area^H\rightarrow \curv(F)^G$ and has cokernel of dimension one.

\subsection{Transfer map}Let us recall the transfer map introduced in \cite{bfs}. We consider specifically the case of $G$-invariant curvature measures in an isotropic space $(M,G)$.
Fix $\xi\in SM$, and let $K$ be the subgroup of elements of $G$ fixing $\xi$ (i.e. the isotropy group of $\xi$ with respect to the action of $G$ on $SM$). Let $x=\pi(\xi)\in M$ and denote by $H$  the isotropy group of $x$. Putting $F=T_xM$, one can identify the $G$-invariant differential forms of $SM$ with the $H\ltimes F$-invariant differential forms of $SF$ through
\begin{equation}\label{identif_forms}
 \Omega^*(SM)^G\cong (\Lambda^* T_\xi^* SM)^K\cong \left(\Lambda^* (F^*\oplus (\xi^\bot)^*)\right)^K\cong \Omega^*(SF)^{H\ltimes F}.
\end{equation}
In the middle step we used the decomposition 
\begin{equation}\label{decomp}
 T_\xi SM=T_{\pi(\xi)}M\oplus \xi^\bot
\end{equation}
induced by the Levi-Civita connection of $M$. As shown in \cite[Proposition 2.5]{bfs}, the identification \eqref{identif_forms} induces a linear isomorphism 
\[
 \tau\colon \curv(F)^H\longrightarrow \mathcal{C}(M)^G
\]
from the space of  $H$-invariant, translation-invariant curvature measures of $F$ to the space of $G$-invariant curvature measures of $M$. The inclusion $\rho\colon \Area^H\to\mathcal C(M)$ in Theorem \ref{mainthm} is simply defined by $\rho=\tau\circ i$.

\subsection{First variation}
The first variation of valuations in a Riemannian manifold $M$  was described in \cite{hig} by a map
\[
 \delta\colon \mathcal{V}(M)\rightarrow \mathcal{C}(M)
\]
such that, for any smooth regular domain $A\subset M$ and any smooth $1$-paramater family of diffeomorphisms $\{\phi_t\}$ of $M$ one has
\begin{equation}\label{delta_description}
 \frac{d}{dt}\mu(\phi_t(A))=\int_{\partial A} \left\langle \nu_A(x),\frac{\partial\phi_t}{\partial t}(x)\right\rangle\delta\mu(A,dx),
\end{equation}
where $\nu_A$ denotes the outward pointing unit normal vector field to $\partial A$.  The differential forms defining the curvature measure $\delta \mu$ can be obtained from those definig the valuation $\mu$ by means of the so-called Rumin differential (cf.  \cite{bb,rumin}).

For an affine space $F$, an area measure valued version of the first variation map was introduced in \cite{wannerer.jdg} as the map
\[
 \delta_A\colon \Val(F)\rightarrow \Area(F)
\]
such that $\delta|_{\Val}=i\circ\delta_A$.

\section{Contact measures}
Contact measures were introduced by Firey \cite{firey} for convex sets in Euclidean space, and by Teufel \cite{teufel} for submanifolds in homogeneous spaces. Our construction is essentially equivalent to those, but is more adapted to the language of valuations that has been   surveyed above. The geometric idea remains the same: to measure the set of positions in which a moving body touches another one.

Let $(M,G)$  be a Riemannian isotropic space. This means that $G$ is a connected Lie group acting isometrically on the Riemannian manifold $M$, and such that the induced action on $SM$ is transitive.

Consider ${E=SM\times G}$ and the double fibration
\begin{displaymath}
 \xymatrix{
&  E \ar[dl]_{p} \ar[dr]^{q} &  \\
SM\times SM & & G}
\end{displaymath}
given by $p({\eta,g})=(g\eta,\eta)$ and $q({\eta,g})=g$. Note that $q(p^{-1}(\xi,\eta))$ is the set of $g\in G$ such that $g\eta=\xi$.

In the following we denote by $\pi$ the bundle projection of $SM$ onto $M$, and $\pi_1,\pi_2\colon SM\times SM\to SM$ will be the projections onto each factor.

We normalize the Haar measure of $G$ as in \cite[(2.21)]{bfs}. We also fix an orientation on $G$ and let $dg$ be the volume form associated to the Haar measure with the chosen orientation. Define
\begin{equation}\label{cf}
\cf= -\iota_{T_1}p_*q^*dg\in\Omega^{2n-2}(SM\times SM),
\end{equation}
where $T_1$ is the  vector field in $SM\times SM$ given by $T_1(\xi,\eta)=(T(\xi),0)$,  and $T$ is the vector field on $SM$ which generates the geodesic flow in $SM$. We are taking on $SM$ the same orientation as in \cite[\S 3.1.1]{fu.intersection} and the product orientation wherever it applies. For push-forwards we follow the same convention as in  \cite[\S 2]{fu.intersection}. For the preimage of a submanifold by a submersion, we take the orientation specified in \cite[\S 2.1.3]{fu.intersection}.

Let $s$ be the involution $\xi\mapsto -\xi$ in $SM$. Given $B\in\mathcal P(M)$ we will consider both $N(B)$ and $s(N(B))$. On $N(B)$ we take the orientation described in \cite[\S 3.2]{fu.intersection}, and we orientate $s(N(B))$ so that $s|_{N(B)}$ is orientation reversing. If $B$ is a regular domain with smooth boundary $\partial B=C$, and $N(B)\cup s(N(B))$ is naturally identified with the boundary of a tubular neigborhood $C_\epsilon$ around $C$, then the orientations we have taken correspond to the one induced by $C_\epsilon$ on its boundary.

\begin{definition}
The {\em contact measure} of an isotropic space $(M,G)$ is the element $\cm\in \mathcal{C}(M)^G\otimes \mathcal{C}(M)^G$ given by
\[
 \cm(A,U,B,V)= 
 \int_{(N(A)\cap \pi^{-1}U)\times s(N(B)\cap \pi^{-1}V)} \cf,
\]where $A,B\in\mathcal P(M)$ and $U,V\subset M$ are Borel sets. 
\end{definition}

The geometric meaning of contact measures is the following: $\cm(A,U,B,V)$ measures the set of positions in which a copy of $B$ moving under $G$ touches $A$ in such a way that the contact occurs at points of $U,V$ respectively. This interpretation is already implicit in the previous construction but will be more clear after Corollaries \ref{meaning} and  \ref{firey}. The negative sign in \eqref{cf} is taken so that contact measures on pairs of locally convex sets are positive (cf. Corollary \ref{thm_smooth}).

\bigskip Recall that $SM$ is endowed with a canonical contact form $\alpha$ given by $\alpha_\xi=\langle d\pi,\xi\rangle$. A submanifold $P$ of $SM$ is called {\em Legendrian} if $\alpha|_P\equiv 0$. A vector field $Y$ in $SM$ is called a {\em contact vector field} if $\mathcal L_Y\alpha=f\alpha$ for some function $f$. For instance, the vector field $T$ associated to the geodesic flow is a contact vector field as
\[
 \alpha(T)=1, \quad \mathcal L_T \alpha=0.
\]
Moreover, these two conditions characterize $T$ as the {\em Reeb vector field} corresponding to $\alpha$. Note also that the image of a Legendrian submanifold under the flow associated to a contact vector field is also Legendrian. Given a vector field $X$ in $M$ there is a unique contact vector field $X^c$ in $SM$, called {\em complete lift} of $X$, such that $d\pi X^c=X$ (cf. \cite{yano.ishihara}).

In the following we denote by $A\cdot B$ the algebraic intersection of two oriented submanifolds $A,B$ of complementary dimension intersecting transversely (cf. e.g. \cite{guillemin.pollack}). 

\begin{proposition}\label{intersection}
Let $P,Q\subset SM$ be smooth, oriented, Legendrian and relatively compact submanifolds of dimension $n-1$. Let $Y$ be a contact vector field on $SM$ transverse to $P$, and denote by $\phi_t$ the associated flow. Consider the submanifolds  $P_t=\phi_t(P)$ and $R=\bigcup_{0<t<r} P_t$ for small $r$. Choose on $R$ the orientation induced by $\phi\colon(0,r)\times P\rightarrow R$. Then
\[
 \int_G  \algint{R}{gQ}\  dg=(-1)^{n-1}\int_0^r\left[ \int_{P_t\times Q} \pi_1^*(\alpha(Y)) \cf \right]dt.
\]
\end{proposition}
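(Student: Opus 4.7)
My plan is to reformulate the left-hand side as an integral of the $(2n-1)$-form $\omega := p_* q^* dg$ over the $(2n-1)$-dimensional submanifold $R \times Q \subset SM \times SM$, and then match this integral pointwise with the right-hand side using the flow parametrization of $R$ together with a structural identity on products of Legendrian submanifolds.

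First, since the $G$-action on $SM$ is transitive, the map $p\colon E \to SM \times SM$ is a submersion; consequently $p^{-1}(R \times Q) \subset E$ is a smooth submanifold of dimension $\dim G$, and $q$ restricted to it has (generically discrete) fibers whose signed counts equal $\algint{R}{gQ}$. The pushforward formula for algebraic intersections, with the orientation conventions of \cite[\S 2.1.3]{fu.intersection}, combined with Fubini on the double fibration, gives
\[
 \int_G \algint{R}{gQ}\, dg \;=\; \epsilon \int_{p^{-1}(R \times Q)} q^*dg \;=\; \epsilon \int_{R \times Q} \omega
\]
for some sign $\epsilon \in \{\pm 1\}$ to be tracked later. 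Parametrizing $R$ by $\Phi(t,p) = \phi_t(p)$, so that $\Phi_* \partial_t = Y$, and using the product structure on $R \times Q$, this becomes
\[
 \int_{R \times Q}\omega \;=\; \int_0^r dt \int_{P_t \times Q}(\iota_{Y_1}\omega)\big|_{P_t \times Q},
\]
where $Y_1 = (Y,0)$ acts on the first factor of $SM \times SM$.

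Next, I would decompose the contact vector field as $Y = \alpha(Y) T + Y_H$ with $Y_H \in \ker \alpha$; this decomposition is unique, and $Y_H$ is in fact determined by the contact Hamiltonian $\alpha(Y)$, since contact vector fields are parametrized by their Hamiltonians. Substituting and using $\cf = -\iota_{T_1}\omega$,
\[
 \iota_{Y_1}\omega \;=\; \pi_1^*(\alpha(Y))\,\iota_{T_1}\omega + \iota_{(Y_H)_1}\omega \;=\; -\pi_1^*(\alpha(Y))\,\cf + \iota_{(Y_H)_1}\omega.
\]
The crucial claim, and the main technical obstacle of the proof, is the vanishing
\[
 (\iota_{(Y_H)_1}\omega)\big|_{P_t \times Q} \;=\; 0 \qquad \text{whenever $P_t$ and $Q$ are Legendrian.}
\]

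To establish this, observe that both $\pi_1^*\alpha$ and $\pi_2^*\alpha$ pull back to zero on the Legendrian product $P_t \times Q$, so it is enough to show that $\iota_{(Y_H)_1}\omega$ lies in the ideal generated by $\pi_1^*\alpha$ and $\pi_2^*\alpha$. The form $\omega$ is invariant under the diagonal action of $G$ on $SM \times SM$ (the bi-invariance of $dg$ and the equivariance $p \circ (g_0\cdot) = g_0\cdot_{\mathrm{diag}}\circ\, p$, $q \circ (g_0\cdot) = c_{g_0}\circ q$ make $\omega = p_*q^*dg$ diagonal-invariant), so by $G$-equivariance it suffices to verify the vanishing at one representative in each $G$-orbit of $SM \times SM$. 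Equivalently, by the transfer principle of \cite{bfs}, the computation reduces to the affine isotropic space $(F, H \ltimes F)$ with the same isotropy group $H$; on $SF = F \times S(F)$ the form $\omega$ admits an explicit expression in Darboux-style coordinates, and the required identity becomes a direct multilinear-algebra check (of exactly the type one can carry out by hand when $n=2$, where $\omega = d\theta \wedge(dx'_1 - (Adx)_1)\wedge(dx'_2 - (Adx)_2)$ with $\theta = \phi'-\phi$). Combining the three steps, $(\iota_{Y_1}\omega)|_{P_t\times Q} = -\pi_1^*(\alpha(Y))\cf|_{P_t\times Q}$, so that the left-hand side equals $-\epsilon\int_0^r dt \int_{P_t\times Q}\pi_1^*(\alpha(Y))\cf$; tracking the orientation conventions forces $\epsilon = (-1)^n$, which combines with the sign from $\cf = -\iota_{T_1}\omega$ to produce the overall factor $(-1)^{n-1}$ in the statement.
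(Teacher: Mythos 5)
Your proof follows the same skeleton as the paper's: reduce the left side to $\int_{R\times Q}p_*q^*dg$ via the double fibration and coarea, then parametrize $R$ by the flow of $Y$ to turn the integral over $R\times Q$ into an iterated integral of a contraction against $Y$. Where you diverge is in the last step: the paper substitutes the structural identity $p_*q^*dg=(\pi_2^*\alpha-\pi_1^*\alpha)\wedge\cf$ (proved later, in Proposition~\ref{fiber_integration}) and lets the Legendrian condition kill both $\alpha$-terms; you instead decompose the contact field as $Y=\alpha(Y)T+Y_H$ and isolate the residual term $\iota_{(Y_H)_1}\omega$, which must then be shown to vanish on $T(P_t\times Q)$.

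Your decomposition is algebraically sound, and the residual vanishing you state is indeed correct: it is exactly what one gets by contracting the paper's identity with $(Y_H,0)$, and (since $(Y_H,0)\in\ker\pi_1^*\alpha\cap\ker\pi_2^*\alpha$) it is equivalent to the statement that $\omega$ restricts to zero on the joint contact distribution $\ker\pi_1^*\alpha\cap\ker\pi_2^*\alpha$. However, this is precisely the nontrivial structural content of the argument, and you leave it unproven. Your plan — use $G$-equivariance of $\omega$ to reduce to a slice of $G$-orbits and then an explicit local computation in the affine model — is viable, but it is not carried out, and the $n=2$ formula you offer as an illustration ($\omega = d\theta\wedge(dx'_1-(Adx)_1)\wedge(dx'_2-(Adx)_2)$ with $\theta = \phi'-\phi$) does not appear to be correct as written: $d(\phi'-\phi)$ is not $\pi_2^*\alpha-\pi_1^*\alpha$, and it is the latter that actually organizes the vanishing. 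The paper's Proposition~\ref{fiber_integration} is exactly the computation you are deferring, so in effect you have re-identified the needed lemma but not discharged it. The sign $\epsilon=(-1)^n$ is also asserted rather than derived; in the paper it comes from three tracked orientation identities ((2.4) of \cite{fu.intersection}, the diffeomorphism $(\xi,h)\mapsto(g\xi,hg^{-1})$, and $\Delta\cdot(A\times B)=(-1)^{\dim A}A\cdot B$). To make the proposal complete you would need to (i) actually verify that $\omega$ lies in the ideal generated by $\pi_1^*\alpha,\pi_2^*\alpha$, most naturally by the local-trivialization computation of Proposition~\ref{fiber_integration}, and (ii) supply the orientation bookkeeping yielding $\epsilon$.
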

\begin{proof}
Let $O=R\times Q\subset SM\times SM$. Then 
\begin{align}\notag
\algint{p^{-1}(O)}{q^{-1}(g)}&=  (-1)^{\dim G}\algint{q^{-1}(e)}{p^{-1}(R\times gQ)}\\
&\notag=(-1)^{\dim G} \algint{p(q^{-1}(e))}{(R\times gQ)}\\
&=(-1)^{n}\algint{R}{gQ} \label{gQR}.
\end{align}
In the first line we applied the orientation preserving diffeomorphism of $SM\times G$ given by $(\xi,h)\mapsto (g\xi, hg^{-1})$. The second line follows from \cite[(2.4)]{fu.intersection}. The third line follows since $p(q^{-1}(e))$ is the diagonal $\Delta$ in $SM\times SM$, with the orientations differing by a factor $(-1)^{\dim G}$, and it is a general fact that $\algint{\Delta}{(A\times B)}=(-1)^{\dim A}\algint{A}{B}$.
By the coarea formula (cf. e.g. \cite[(2.3)]{fu.intersection}) and the defining relation of push-forward (cf. \cite[(2.6)]{fu.intersection}),
\begin{equation}\label{coarea}
 \int_G {\algint{p^{-1}(O)}{q^{-1}(g)}}\  dg=\int_{p^{-1}O}q^*dg=\int_O p_*q^*dg.
\end{equation}

We will see in the proof of Proposition \ref{fiber_integration} that $p_*q^*dg={(\pi_2^*\alpha-\pi_1^*\alpha)}\wedge \cf$.
Putting  $\psi(t,\xi,\eta)=( \phi_t(\xi),\eta)$, we have $O=\psi([0,r]\times P\times Q)$, so
\begin{align}
\notag \int_O p_*q^*dg&=\int_O (\pi_2^*\alpha-\pi_1^*\alpha)\wedge\cf\\
&\notag=-\int_O\pi_1^*\alpha\wedge\cf\\
&\notag=-\int_{[0,r]\times P\times Q}\psi^*(\pi_1^*\alpha\wedge\cf) \\
&=-\int_0^r \left(\int_{ P_t\times Q}  \pi_1^*(\alpha(Y))\cf\right) dt\label{iotaT},
\end{align}
since $Q$ and all the $P_t$ are Legendrian.
The result follows from \eqref{gQR}, \eqref{coarea}, and \eqref{iotaT}.
\end{proof}

Note that Proposition \ref{intersection} applies also to the case where $P,Q$ are (Borel domains inside) normal cycles of smooth polyhedra. Indeed, for $A\in\mathcal P(M)$, the normal cycle $N(A)$ is a finite union of smooth relatively compact submanifolds of $SM$.

\begin{corollary}\label{meaning}
\begin{equation}\label{variation}
 (\delta\otimes\id) k(\chi)=(\id\otimes \glob) \cm.
\end{equation}
\end{corollary}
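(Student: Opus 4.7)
The plan is to apply Proposition \ref{intersection} with $Y=T$ the Reeb vector field, $P=N(A)\cap\pi^{-1}(U)$, and $Q=s(N(B))$, and to recognize both sides of \eqref{variation} by taking the $r\to 0$ derivative of the resulting identity. To unfold the left-hand side, write $k(\chi)=\sum c_{ij}\varphi_i\otimes\varphi_j$ and set $\mu_B:=\sum c_{ij}\varphi_j(B)\,\varphi_i$, so that $\mu_B(A)=\int_G\chi(A\cap gB)\,dg$ by the very definition of $k(\chi)$; then $(\delta\otimes\id)k(\chi)$ evaluated at $(A,U,B)$ equals $\delta\mu_B(A,U)$. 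I would first treat the case of a smooth regular domain $A$ and a Borel set $U$ with smooth boundary: choosing a vector field $X$ on $M$ with $\langle X,\nu_A\rangle \approx \mathbf{1}_U$ along $\partial A$ and letting $\psi_r$ be its flow, the defining property \eqref{delta_description} gives $\delta\mu_B(A,U)=\frac{d}{dr}\big|_0\mu_B(\psi_rA)=\frac{d}{dr}\big|_0\int_G\chi(\psi_rA\cap gB)\,dg$.

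The geometric heart of the argument is the identity
\[
\chi(\psi_rA\cap gB)-\chi(A\cap gB)=(-1)^{n-1}\algint{R_r}{g\,s(N(B))},
\]
valid for small $r>0$ and generic $g\in G$, where $R_r:=\bigcup_{0<t<r}\phi_t(P)$ and $\phi_t$ denotes the Reeb flow on $SM$, so that $R_r$, $P$, $Q$ are exactly the objects appearing in Proposition \ref{intersection}. The intuition is that each transverse intersection of $R_r$ with $g\,s(N(B))$ records a tangency between $\partial\psi_sA$ and $\partial gB$ at some intermediate time $s\in(0,r)$, which is a Morse-type event changing $\chi(\psi_sA\cap gB)$ by $\pm1$. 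Verifying that the Morse sign matches the algebraic intersection sign, up to the global factor $(-1)^{n-1}$ coming from the orientation conventions for $s(N(B))$ and $R_r$ fixed before Proposition \ref{intersection}, is the main obstacle; it will require a careful local Morse model at tangency points together with a systematic bookkeeping of all orientations.

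Once that identity is established, integrating over $G$ and applying Proposition \ref{intersection} (noting $\alpha(T)=1$) yields
\[
\int_G\bigl[\chi(\psi_rA\cap gB)-\chi(A\cap gB)\bigr]\,dg=\int_0^r\!\int_{\phi_t(P)\times Q}\cf\,dt.
\]
Dividing by $r$ and letting $r\to 0$, and justifying the interchange of $\frac{d}{dr}\big|_0$ with $\int_G$ by the smoothness of $\mu_B$ and the compactness of the relevant supports, I obtain
\[
\delta\mu_B(A,U)=\int_{P\times Q}\cf=\int_{(N(A)\cap\pi^{-1}U)\times s(N(B))}\cf=(\id\otimes\glob)\cm(A,U,B),
\]
which is \eqref{variation} evaluated at $(A,U,B)$. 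The extension to arbitrary $A,B\in\mathcal P(M)$ and arbitrary Borel $U$ then follows by a standard approximation argument, exploiting that both sides of \eqref{variation} are smooth (hence regular) curvature measures.
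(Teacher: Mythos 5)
Your argument has a genuine flaw in the choice of contact vector field. You apply Proposition~\ref{intersection} with $Y=T$, the Reeb vector field, while simultaneously computing the first variation of $\mu_B$ via the flow $\psi_r$ of a vector field $X$ on $M$ with $\langle X,\nu_A\rangle\approx\mathbf 1_U$. These two deformations are different. The flow of $X$ on $M$ lifts to the flow of the \emph{complete lift} $X^c$ on $SM$, and $\phi_t^{X^c}(N(A))=N(\psi_t A)$; by contrast, the Reeb flow $\phi_t^{T}$ produces the normal wavefront (the parallel-set deformation), which is not the lift of any diffeomorphism of $M$, let alone of $\psi_t$. Consequently your claimed Morse identity
\[
\chi(\psi_rA\cap gB)-\chi(A\cap gB)=(-1)^{n-1}\algint{R_r}{g\,s(N(B))},\qquad R_r=\bigcup_{0<t<r}\phi_t^{T}(P),
\]
equates a $\chi$-difference driven by $\psi_r$ with intersections of a Reeb-flowed tube: the left side counts tangency events of $N(\psi_tA)$ against $s(N(gB))$, while the right side counts tangencies of the Reeb-translates $\phi_t^T(P)$. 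These are different sets of events and the equality does not hold in general. (In the one setting where $T$ is the right choice, namely Corollary~\ref{firey}, one works precisely with parallel bodies $K_t$, so there $N(K_t)=\phi_t^T(N(K))$ and the flows match.)

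The paper's proof avoids this by keeping a single deformation throughout: it takes an arbitrary vector field $X$ (first with $\langle X,\xi\rangle>0$, extended by linearity), applies Proposition~\ref{intersection} with $Y=X^c$, and uses the cited identity from Fu (Lemma~3.6 of~\cite{fu.intersection}) for the Morse-theoretic step. The factor $\pi_1^*(\alpha(X^c))$ is then retained in the integrand; since $\alpha(X^c)(\nu_A(x))=\langle X(x),\nu_A(x)\rangle$, the resulting formula agrees with the defining relation \eqref{delta_description} of $\delta$ for every $X$, which is exactly what is needed to conclude the identity of curvature measures (rather than an identity only at a fixed $U$). To repair your proof you should replace $Y=T$ by $Y=X^c$, take $P=N(A)$ (unrestricted) so that $\phi_t(P)=N(A_t)$, and then compare the two bilinear expressions in $X$ rather than attempting to localize to $U$ by an indicator-like choice of $X$; this also removes the need for your Morse-sign verification, which the paper handles by citation.
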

\begin{proof}
Given $B\in \mathcal P(M)$, we consider $\mu_B=k(\chi)(\,\cdot\,,B)$ and need to show $\delta \mu_B=b(\,\cdot\,,\,\cdot\,,B,B)$ as curvature measures. Given a regular domain $A$ and a vector field $X$, by \eqref{delta_description}
\[
\left.\frac{d}{dt}\right|_{t=0} \mu_B(A_t)=\int_{\partial A} \langle \nu_A(x),X(x)\rangle \delta \mu_B(A,dx),
\]
where  $A_t$ denotes the image of $A$ under the flow associated to $X$. On the other hand
\[
\int_{\partial A} \langle \nu_A(x),X(x)\rangle b(A,dx,B,B)=\int_{N(A)\times s(N(B))} \pi_1^*(\alpha(Y))\, \kappa,
\]
where $Y=X^c$ is the complete lift of $X$ to $SM$.
Hence,  we need to prove
\[
\left.\frac{d}{dt}\right|_{t=0}\mu_B(A_t)=\int_{N(A)\times s( N(B))} \pi_1^*( \alpha(Y))\,  \cf.
\] It is enough to consider the case when $\langle X,\xi\rangle >0$ as any vector field can be put as a difference of two such $X$. In this case, for almost every $g\in G$ (see \cite[Lemma 3.6]{fu.intersection} with another sign because we oriented $s(N(B))$ differently)
\[
 \chi(A_r\cap gB)-\chi(A\cap gB)=(-1)^{n-1}\algint{s(N(gB))}{\bigcup_{0<t<r} N(A_t)}.
\]
Hence, the result follows from Proposition \ref{intersection} with $P=N(A), Q=s(N(B))$. Indeed, in this case $\phi_t(N(A))=N(A_t)$.
\end{proof}

It is interesting to notice that Corollary \ref{meaning} can also be obtained from \cite[Theorem 6.3]{fu.intersection} by J. Fu. Let us note however that the factor $(-1)^n$ in the statement of that theorem should be removed, as it disappears in the last but one equality of the proof.  Taking this into account, Fu's theorem implies that $\delta \mu_B$ is given by 
\[
 \delta \mu_B(A,U)=\int_{N(A)\cap \pi^{-1}U}i_Ts^*\omega_B 
\]
where $\omega_B\in\Omega^{n}(SM)$ is such that
\[
\int_{SM} \beta\wedge \omega_B =\int_G\left(\int_{N(gB)} \beta\right)dg,\qquad \forall \beta\in\Omega_c^{n-1}(SM).
\]
Let $r_B\colon SM\times N(B)\rightarrow SM$ be the projection on the first factor, and let $j_B\colon SM\times N(B)\rightarrow SM\times SM$ be the inclusion. By \cite[(2.2)]{fu.intersection}, and taking care of the orientation conventions, we have
\begin{align*}\int_{SM}\beta\wedge(r_B)_*(j_B^*p_*q^*dg)&=\int_{SM\times N(B)}r_B^*\beta\wedge j_B^*p_*q^*dg\\
&=(-1)^{n}\int_{N(B)\times G} (r_B\circ p)^*\beta\wedge q^*dg.
\end{align*}Since
\begin{align*}\int_{N(B)\times G} (r_B\circ p)^*\beta\wedge q^*dg&=\int_G\left(\int_{N(gB)}\beta\right)dg,
\end{align*}
we see that $\omega_B$ can be explicitly presented as
$$\omega_B=(-1)^{n}(r_B)_*j_B^*p_*q^*dg\in \Omega^n(SM).$$
Since $s\colon N(A)\rightarrow s(N(A))$ reverses orientations, and $(s\otimes s)^*\kappa=(-1)^{n-1}\kappa$ by Proposition \ref{fiber_integration}, we have
\begin{align*}
 \delta\mu_B(A,U)&= (-1)^n \int_{N(A)\cap\pi^{-1}U} i_T(s^*(r_B)_*j_B^*p_*q^*dg)\\
&=(-1)^n\int_{N(A)\cap\pi^{-1}U} s^*(r_B)_*j_B^*\kappa\\
&=(-1)^{n-1}\int_{s(N(A)\cap\pi^{-1}U)\times N(B)}\kappa\\
&=\cm(A,U,B,B),
\end{align*}
which is the content of Corollary \ref{meaning}.  

A further consequence of Proposition \ref{intersection} is the following.

\begin{corollary}\label{firey}Let $(F,H\ltimes F)$ be  a Euclidean affine isotropic space with isotropy group $H$. Let $\cm_0\in\curv\otimes\curv$ be the contact measure on this space.
Let $K,L$ be convex bodies in  $F$ with smooth positively curved boundary and $U,V$ Borel sets in the unit sphere $S(F)$. Given $r>0$ let $m_r(K,U,L,V)$ be the Haar measure  of the set of $g\in H\ltimes F$ such that  
\[\min\{|y-x|\colon x\in K, y\in gL\}=|y_0-x_0|
\]
for some  $x_0\in K,y_0\in gL$ with $|y_0-x_0|<r$ and $(y_0-x_0)/|y_0-x_0|\in U\cap s(gV)$.Then
\[
\left.\frac{d}{dr}\right|_{r=0}m_r(K,U,L,V)=\cm_0(K,\gamma_K^{-1}U,L,\gamma_L^{-1}V),
\]
where $\gamma_K,\gamma_L$ are the Gauss maps of $\partial K,\partial L$ respectively.
\end{corollary}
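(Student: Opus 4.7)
I would deduce the claim directly from Proposition \ref{intersection} by choosing data that isolate nearest-point pairs. Set
\[
 P = N(K)\cap\pi^{-1}(\gamma_K^{-1}U), \qquad Q = s\bigl(N(L)\cap\pi^{-1}(\gamma_L^{-1}V)\bigr),
\]
which are $(n{-}1)$-dimensional Legendrian pieces of normal cycles in $SF$, and take $Y=T$ the Reeb vector field, so that $\alpha(T)\equiv 1$ and $T$ is transverse to $P$ by positive curvature of $\partial K$. In the flat case the flow is $\phi_t(x,\xi)=(x+t\xi,\xi)$, so $P_t$ is the portion of the normal cycle of the outer parallel body $K_t$ whose outer normals lie in $U$, and $R=\bigcup_{0<t<r}P_t$ is the corresponding piece of the normal tube of thickness $r$ around $\partial K$.

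Next I would identify $R\cap gQ$ geometrically. A point $(x,\xi)\in R\cap gQ$ forces $\xi=\gamma_K(x_0)\in U$ and $x=x_0+t\xi$ for some $x_0\in\partial K$, $t\in(0,r)$, while simultaneously $x\in\partial(gL)$, $\gamma_{gL}(x)=-\xi$ and $\xi\in s(gV)$. These conditions characterize precisely a nearest-point pair $(x_0,x)$ between $K$ and $gL$ at distance $t<r$ whose contact direction lies in $U\cap s(gV)$, and uniqueness of such a pair for smooth positively curved convex bodies makes $R\cap gQ$ a single transverse point for almost every contributing $g$ and empty otherwise. For the sign at that point I would apply the identity from \cite[Lemma 3.6]{fu.intersection} used in the proof of Corollary \ref{meaning}, now with $A_t=K_t$ the parallel body (whose normal cycle evolves under the Reeb flow): since $\chi(K_r\cap gL)-\chi(K\cap gL)=1-0=1$ for contributing $g$, one obtains $\algint{s(N(gL))}{R}=(-1)^{n-1}$, and commuting factors (with $n(n-1)$ even) together with the Borel localization yields $\algint{R}{gQ}=(-1)^{n-1}$ on the contributing set. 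Integration over $G$ gives
\[
 \int_G R\cdot gQ\,dg = (-1)^{n-1}\,m_r(K,U,L,V).
\]

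On the other hand, Proposition \ref{intersection} with these choices and $\alpha(Y)=1$ expresses the same integral as $(-1)^{n-1}\int_0^r\!\int_{P_t\times Q}\kappa\,dt$. Equating the two expressions, cancelling the common sign $(-1)^{n-1}$, and differentiating at $r=0$ via the fundamental theorem of calculus gives
\[
 \left.\frac{d}{dr}\right|_{r=0} m_r(K,U,L,V) = \int_{P\times Q}\kappa = \cm_0(K,\gamma_K^{-1}U,L,\gamma_L^{-1}V),
\]
which is the claim. The main obstacle I anticipate is the sign computation, together with the fact that \cite[Lemma 3.6]{fu.intersection} is phrased for the ambient flow $\phi_t^X$ of a vector field on $F$, whereas the Reeb flow on $SF$ is not the complete lift of any single vector field on $F$; one therefore has to either justify the lemma directly for the contact isotopy generating the parallel bodies, or approximate $K_t$ by $\phi_t^X(K)$ for a smooth extension $X$ of the outer unit normal along $\partial K$ and pass to the limit.
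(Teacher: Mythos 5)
Your proposal is correct and follows essentially the same route as the paper: take $P=N(K)\cap\pi_S^{-1}U$, $Q=s(N(L)\cap\pi_S^{-1}V)$, $Y=T$ the Reeb field (so $\phi_t(N(K))=N(K_t)$ and $\alpha(Y)\equiv 1$), identify $m_r$ with $(-1)^{n-1}\int_G \algint{R}{gQ}\,dg$, and invoke Proposition \ref{intersection}; your sign bookkeeping matches the paper's. The one place where you add machinery the paper avoids is the detour through \cite[Lemma 3.6]{fu.intersection}: the paper simply reads off the identity $m_r=(-1)^{n-1}\int_G \algint{s(N(gL)\cap\pi_S^{-1}gV)}{R}\,dg$ directly from the nearest-point interpretation for convex bodies (each contributing $g$ gives exactly one transverse crossing), so the caveat you raise about the Reeb flow not being a complete lift never arises; your own geometric characterization of $R\cap gQ$ as the unique nearest-point pair already contains everything needed, and the Euler-characteristic rephrasing is an unnecessary complication.
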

\begin{proof} Let $K_t$ denote the parallel body to $K$ at distance $t$. Clearly $m_r(K,U,L,V)$ measures the set of $g$ such that $s\left(N(gL)\cap\pi_S^{-1}gV\right)$ intersects some $N(K_t)\cap \pi_S^{-1}U$ with $0<t<r$. Hence,
\[
m_r(K,U,L,V)=(-1)^{n-1}\int_{H\ltimes F}s \left(N(gL)\cap \pi_S^{-1}(gV)\right)\cdot \bigcup_{0<t<r}N(K_t)\cap \pi_S^{-1}(U) dg.
\]
 The result follows from Proposition \ref{intersection} with $Y=T$, the Reeb vector field, $P=N(K)\cap \pi_S^{-1}U$, and $Q=s(N(L)\cap \pi_S^{-1}V)$.
\end{proof}

The following proposition shows the relation between contact measures and the kinematic formula  for the surface area measure in affine isotropic spaces (cf. \cite{wannerer.aim}). This connection was originally established  by Schneider \cite{schneider} for $H=SO(n)$. 

\begin{proposition} Let $(F,H\ltimes F)$ be  a Euclidean affine $n$-dimensional isotropic space with isotropy group $H$. Let $\cm_0\in\curv\otimes\curv$ be the contact measure on this space. Consider the canonical injection $i\colon\mathrm{Area}\hookrightarrow\mathrm{Curv}$, and the involution $r\colon\mathrm{Area}\rightarrow\mathrm{Area}$ given by $r(\Phi)(K,U)=\Phi(-K,-U)$. Then 
\begin{equation}\label{connection}
 (i\otimes (i\circ r))A(S_{n-1})=\cm_0.
\end{equation}
Equivalently, given convex bodies $K,L\subset F$, and Borel domains $U,V\subset S(F)$ on the unit sphere
 \begin{equation}\label{akl}
  A(S_{n-1})(K,U,-L,-V)=
\int_{(N(K)\cap \pi_S^{-1}U)\times s(N(L)\cap \pi_S^{-1}V)} \cf,
 \end{equation} where $\pi_S$ is the projection of $SF=F\times S(F)$ to the second factor. 
\end{proposition}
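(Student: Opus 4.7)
The plan is to reduce \eqref{connection} to its pointwise instance \eqref{akl} on smooth, strictly convex $K,L$, and then to derive \eqref{akl} by combining the infinitesimal geometric content of Corollary \ref{firey} with the classical local Steiner formula for the surface area measure.

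\emph{Reduction.} Both sides of \eqref{connection} live in $\curv(F)^G\otimes\curv(F)^G$, so are determined by their evaluation on pairs $(K,L)$ of smooth, strictly convex bodies. For such $K$ the Gauss map is a diffeomorphism $\gamma_K\colon\partial K\to S(F)$, so taking $U=\gamma_K^{-1}U_0$, $V=\gamma_L^{-1}V_0$ with $U_0,V_0\subset S(F)$ one has $i(\Phi)(K,U)=\Phi(K,U_0)$ and similarly for $L$. Unwinding the definition of $r$ and of $i\otimes(i\circ r)$, identity \eqref{connection} then collapses to $\cm_0(K,\gamma_K^{-1}U_0,L,\gamma_L^{-1}V_0)=A(S_{n-1})(K,U_0,-L,-V_0)$, which is \eqref{akl}.

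\emph{Computation of the derivative.} By Corollary \ref{firey} the left-hand side equals $\frac{d}{dr}\big|_{r=0}m_r(K,U_0,L,V_0)$. To compute this, I would use the product decomposition $dg=dh\,dy$ of the Haar measure on $H\ltimes F$ and write $gL=hL+y$ for $g=(h,y)$. A direct inspection of the optimality conditions shows that, for fixed $h$, the minimum distance between $K$ and $gL$ equals $\mathrm{dist}(y,K-hL)$; the corresponding contact direction $(y_0-x_0)/|y_0-x_0|$ is the outer normal of the convex body $K-hL$ at its metric projection $\pi_{K-hL}(y)$; and the sphere condition $(y_0-x_0)/|y_0-x_0|\in U_0\cap s(gV_0)$ translates to $\gamma_{K-hL}(\pi_{K-hL}(y))\in U_0\cap(-hV_0)$. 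Fubini then yields
$$m_r(K,U_0,L,V_0)=\int_H \vol\!\Bigl\{y\notin K-hL \,:\, \mathrm{dist}(y,K-hL)<r,\ \gamma_{K-hL}(\pi_{K-hL}(y))\in U_0\cap(-hV_0)\Bigr\}\,dh.$$
Because $K-hL$ is convex, the local Steiner formula evaluates the inner tube volume as $r\,S_{n-1}(K-hL,\,U_0\cap(-hV_0))+O(r^2)$. Differentiating at $r=0$ and exchanging with the $H$-integration produces $\int_H S_{n-1}(K+h(-L),\,U_0\cap h(-V_0))\,dh$, which by the very definition of the kinematic operator $A$ equals $A(S_{n-1})(K,U_0,-L,-V_0)$. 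This closes the chain.

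\emph{Main obstacle.} The analytic ingredients — Steiner's formula, Fubini, and interchanging $d/dr$ with $\int_H$ — are all classical, and compactness of $H$ together with $H$-equivariance of $S_{n-1}$ give a uniform $O(r^2)$ remainder, so the limit step is routine. The delicate point is the bookkeeping of the several involutions in play: the sphere involution $s$, the body involution $L\mapsto -L$, and the action of $H$ on $S(F)$. Tracking these carefully is exactly what forces the reflection $r$ and the pair $(-L,-V)$ to appear in \eqref{connection}, and it is also what makes the sign in \eqref{cf} the correct one to guarantee positivity on convex pairs.
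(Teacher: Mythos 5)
Your proposal follows the same high-level strategy as the paper: reduce to smooth, strictly convex bodies, invoke Corollary \ref{firey} to identify the right-hand side of \eqref{akl} with $\left.\tfrac{d}{dr}\right|_{r=0}m_r(K,U_0,L,V_0)$, and then show this derivative equals $A(S_{n-1})(K,U_0,-L,-V_0)$. The difference is in the last step: the paper simply cites Schneider's identity (3.4) from \cite{schneider}, noting that his argument for $H=O(n)$ works verbatim for any transitive $H$, whereas you re-derive that identity from scratch via the decomposition $dg=dh\,dy$, the observation that the minimum distance between $K$ and $(h,y)L$ equals $\mathrm{dist}(y,K-hL)$ with normal direction $\gamma_{K-hL}(\pi_{K-hL}(y))$, Fubini, and the local Steiner formula for the outer parallel shell of $K-hL$. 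Your computation is correct and has the merit of being self-contained, and it makes transparent exactly where the reflections $-L,-V$ and the involution $r$ enter. The only cosmetic differences are that the paper handles the passage to general convex bodies explicitly via continuity of normal cycles, which you compress into the remark that both sides are determined by smooth strictly convex pairs, and that your Steiner-formula/Fubini argument is essentially a reconstruction of Schneider's original proof rather than a new route.
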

\begin{proof} It was shown by Schneider (see \cite[(3.4)]{schneider}) that
\[
\left.\frac{d}{dr}\right|_{r=0}m_r(K,U,L,V)=A(S_{n-1})(K,U,-L,-V).
\]
In fact, only the case  $H=O(n)$ is considered in \cite{schneider} but the proof works verbatim for any $H$ acting transitively on the sphere. By Corollary \ref{firey}, this shows \eqref{akl} for $K,L$ with smooth positively curved boundary. For general convex bodies, the equation follows by the continuity of normal cycles (cf. e.g. \cite[Theorem 2.2.1]{fuBCN}).
\end{proof}

\section{Transfer principle}
Here we compute the form $\cf$ explicitly. The resulting expression shows that  contact measures of different spaces with the same isotropy group can be identified with each other through the transfer map. 
In turn, this reveals a connection  between  the principal kinematic formulas in a curved isotropic space, and the additive kinematic formula for the surface area measure in the corresponding affine space, which is our main result.

\begin{proposition}\label{fiber_integration}  
Given $\xi\in SM$, let $\xi,e_1,\ldots e_{n-1}$ be an orthonormal basis of $T_{\pi\xi}M$. 
Let $\theta_i=\langle d\pi,e_i\rangle$, and  $\varphi_i=\langle \nabla,e_i\rangle$, where $\nabla$ is the projection onto the second component of the decomposition $T_\xi SM=T_{\pi\xi}M\oplus \xi^\bot$ induced by the Levi-Civita connection.
Then, for any $h\in G$
 \begin{equation}\label{explicit}
  \cf_{(\xi,h^{-1}\xi)}=\frac{1}{n\omega_n} \int_K \bigwedge_{i=1}^{n-1}(\pi_1^*\theta_i-\pi_2^*(kh)^*\theta_i)\wedge\bigwedge_{j=1}^{n-1}(\pi_1^*\varphi_j-\pi_2^*(kh)^*\varphi_j)dk,
 \end{equation}
where  $K$ denotes the isotropy group of $\xi$ with its Haar probability measure $dk$, and $\pi_1,\pi_2\colon SM\times SM\to SM$ are the projections on each factor. 
\end{proposition}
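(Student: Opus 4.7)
My plan is to compute the pushforward $p_*q^*dg$ at the point $(\xi,h^{-1}\xi)\in SM\times SM$ by fiber integration and then apply $-\iota_{T_1}$ to obtain $\cf$. The fiber $p^{-1}(\xi,h^{-1}\xi)=\{(h^{-1}\xi,g)\in E:g\cdot h^{-1}\xi=\xi\}$ is the right coset $\{(h^{-1}\xi,kh):k\in K\}$, smoothly parametrized by $K$; under the identification $T_{kh}G\cong\mathfrak{g}$ via right translation, its tangent space at $(h^{-1}\xi,kh)$ consists of the vectors $(0,dR_{kh}\bar\kappa)$ for $\bar\kappa\in\mathfrak{k}$.

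Given tangent vectors $v_i=(u_i,w_i)\in T_{(\xi,h^{-1}\xi)}(SM\times SM)$ for $i=1,\ldots,2n-1$, I lift each $v_i$ to $\tilde v_i=(w_i,dR_{kh}\bar X_i)\in T_{(h^{-1}\xi,kh)}E$, where $\bar X_i\in\mathfrak{g}$ is any element with $\bar X_i^*(\xi)=u_i-(kh)_*w_i$ (here $\bar X^*$ denotes the fundamental vector field on $SM$); such lifts exist because $G$ acts transitively on $SM$. Under the splitting $\mathfrak{g}=\mathfrak{p}\oplus\mathfrak{k}$ induced by the isomorphism $\mathfrak{p}\to T_\xi SM,\ \bar X\mapsto\bar X^*(\xi)$, the $\mathfrak{p}$-component $\bar X_i^{\mathfrak{p}}$ is uniquely determined. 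Using bi-invariance of $dg$, the fiber integral giving $p_*q^*dg$ reduces to $\int_K dg_e(\bar X_1,\ldots,\bar X_{2n-1},\bar\kappa_1,\ldots,\bar\kappa_{\dim K})\,dk$, where $\{\bar\kappa_j\}$ is a basis of $\mathfrak{k}$ filling the fiber slot. A dimension count eliminates every $\bar X_i^{\mathfrak{k}}$-contribution after expanding $\bar X_i=\bar X_i^{\mathfrak{p}}+\bar X_i^{\mathfrak{k}}$ by multilinearity: any such term would introduce more than $\dim K$ vectors in $\mathfrak{k}$, on which $dg_e$ vanishes.

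Choose the basis $\bar T,\bar E_1,\ldots,\bar E_{n-1},\bar F_1,\ldots,\bar F_{n-1}$ of $\mathfrak{p}$ corresponding, via $\bar X\mapsto\bar X^*(\xi)$, to the basis of $T_\xi SM$ dual to $\alpha,\theta_1,\ldots,\theta_{n-1},\varphi_1,\ldots,\varphi_{n-1}$. Then the coordinates of $\bar X_i^{\mathfrak{p}}$ in this basis are precisely the values on $v_i$ of the $1$-forms
\[
\pi_1^*\alpha-\pi_2^*(kh)^*\alpha,\quad \pi_1^*\theta_j-\pi_2^*(kh)^*\theta_j,\quad \pi_1^*\varphi_j-\pi_2^*(kh)^*\varphi_j,
\]
since $\alpha((kh)_*w_i)=((kh)^*\alpha)(w_i)=(\pi_2^*(kh)^*\alpha)(v_i)$, and likewise for $\theta_j,\varphi_j$. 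Taking the resulting determinant, invoking the $G$-invariance $(kh)^*\alpha=\alpha$ to move the $\alpha$-factor outside the integral over $K$, and using the normalization $dg_e(\bar T,\bar E,\bar F,\bar\kappa)=\pm\frac{1}{n\omega_n}$ of the Haar form (arising from the factorization $dg=(n\omega_n)^{-1}\,d\mu_{SM}\wedge dk$ under $G\to G/K=SM$, consistent with \cite[(2.21)]{bfs}), yields
\[
p_*q^*dg=(\pi_2^*\alpha-\pi_1^*\alpha)\wedge \tilde\cf,
\]
where $\tilde\cf$ is the right-hand side of \eqref{explicit}.

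Finally, since $T_1=(T,0)$ and $T$ is the horizontal lift of $\xi\in T_xM$, orthogonality gives $\theta_i(T)=\langle\xi,e_i\rangle=0$ and $\varphi_j(T)=0$; combined with the fact that $\iota_{T_1}$ annihilates every $\pi_2^*$-pullback, each factor inside $\tilde\cf$ is killed by $\iota_{T_1}$. Only the prefactor contributes, with $\iota_{T_1}(\pi_2^*\alpha-\pi_1^*\alpha)=-\alpha(T)=-1$, hence $\cf=-\iota_{T_1}p_*q^*dg=\tilde\cf$, which proves \eqref{explicit}. The main obstacle I anticipate is the precise tracking of the normalization constant $\frac{1}{n\omega_n}$ together with the signs: the factor itself arises cleanly from the Haar decomposition through $SM=G/K$ (with $n\omega_n=\vol S^{n-1}$ the fiber volume), but obtaining the formula without any spurious sign requires consistent orientation choices for the fiber of $p$, for the product $SM\times G$, and for the pushforward, in accordance with the conventions fixed in \cite[\S 2]{fu.intersection} and \cite{bfs}.
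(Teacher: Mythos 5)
Your proof is correct and follows essentially the same strategy as the paper's: compute $p_*q^*dg$ at $(\xi,h^{-1}\xi)$ by reducing to an evaluation of $(dg)_e$ on a basis of $\mathfrak g$ dual to $\alpha,\theta_i,\varphi_j,dk$, then apply $-\iota_{T_1}$. The only difference is bookkeeping: the paper constructs an explicit local trivialization $\psi(\zeta,\eta,k)=\gamma(\zeta)k\gamma(h\eta)^{-1}h$ of the fibration $p$ and differentiates it, whereas you lift tangent vectors to $E$ directly via a Lie-algebra element $\bar X_i$ with $\bar X_i^*(\xi)=u_i-(kh)_*w_i$ and dispose of the ambiguity in the lift by a dimension count on the $\mathfrak k$-components; the two devices play the same role. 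You do leave the overall sign of the Haar normalization as $\pm\frac{1}{n\omega_n}$ and flag this honestly — the paper pins it down by requiring that the local trivialization of $p$ preserve orientations, and indeed records the sign as $-\frac{1}{n\omega_n}$ in front of $(dg)_e$; this is the one detail you would need to finish, and it is exactly where the orientation conventions of \cite[\S 2]{fu.intersection} and of $E=SM\times G$ enter.
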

\begin{proof} Let $\phi\colon G\to SM$ be given by $\phi(g)=g\xi$, and let $\gamma:U\to G$ be such that $\phi \circ \gamma=\id$ on an open neighborhood $U$ of $\xi$, and $\gamma(\xi)=e$. Then a local trivialization $U\times h^{-1}U\times K \cong E|_{U\times h^{-1}U}$ of $p$ is given by $(\zeta,\eta,k)\mapsto (\eta,\psi(\zeta,\eta,k))$ with $\psi(\zeta,\eta,k)=\gamma(\zeta)k\gamma(h\eta)^{-1}h$. Denoting by $\pi_1,\pi_2,\pi_K$ the projections onto the factors of $U\times h^{-1}U\times K$, and by $L_g,R_g$ the left and right translations on $G$, we have
\[
 (d\psi)_{(\xi,h^{-1}\xi,k)}=dR_{kh} (d\gamma)_\xi d\pi_1 - dL_kdR_{h} (d\gamma)_\xi (dh)_{h^{-1}\xi} d\pi_2+dR_{h}d\pi_K.
\]
The decomposition $T_eG\equiv T_\xi M\oplus \xi^\bot\oplus T_eK$ yields the following expression for the volume form of $G$ at $e$, which is fixed according to the orientation taken in $G$,
\[
 (dg)_{e}=\frac{-1}{n\omega_n}\phi^*(\langle d\pi,\xi\rangle)\wedge\bigwedge_{i=1}^{n-1}\phi^*\theta_i \wedge \bigwedge_{j=1}^{n-1}\phi^*\varphi_j\wedge dk.
\]
The negative sign is taken so that the volume form $dk$ induces the correct orientation on the fibers of $p$;  that is, so that the previous local trivializations preserve orientations. Hence, 
\[
(\psi^*dg)_{(\xi,h^{-1}\xi,k)}=(d\psi_{(\xi,h^{-1}\xi,k)})^*(dR_{(kh)^{-1}})^*(dg)_e=
\]\[=\frac{-1}{n\omega_n}(\pi_1^*\alpha-\pi_2^*\alpha)\wedge\bigwedge_{i=1}^{n-1}(\pi_1^*\theta_i-\pi_2^*(kh)^*\theta_i)\wedge\bigwedge_{j=1}^{n-1}(\pi_1^*\varphi_j-\pi_2^*(kh)^*\varphi_j)\wedge \pi_K^*dk
\]
and equation \eqref{explicit} follows. 
\end{proof}

\begin{corollary}\label{thm_smooth}Given smooth regular domains $A,B\subset M$, let  $S_A,S_B$ be the shape operators of $\partial A,\partial B$ corresponding to the exterior normal fields $\nu_A,\nu_B$. Fix $\xi\in SM$, and let $h=h(x,y)\in G$ be such that $h\cdot\nu_B(y)=s(\nu_A(x))$.  Then
\begin{equation}\label{smooth}
  \cm(A,U,B,V)=\frac{1}{n\omega_n}\int_{\partial A\cap U}\int_{\partial B\cap V} \int_{K_x}\det((S_A)_x+d(kh)\circ (S_B)_y\circ d(kh)^{-1})dk dydx,
 \end{equation}where $K_x$ stands for the isotropy group of $\nu_A(x)$,  endowed with the Haar probability measure $dk$, and $dx,dy$ are the volume elements of $\partial A,\partial B$ respectively.
\end{corollary}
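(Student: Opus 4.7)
The plan is to parametrize the domain of integration in the definition of $\cm(A,U,B,V)$ by the Gauss maps of the boundaries $\partial A,\partial B$, and to pull back the explicit expression for $\kappa$ provided by Proposition \ref{fiber_integration}. For a smooth regular domain $A$ the normal cycle $N(A)$ is the graph of the Gauss map $\nu_A\colon \partial A\to SM$, and $s(N(B))$ is the graph of $-\nu_B\colon \partial B\to SM$. Thus the map $\Phi(x,y)=(\nu_A(x),-\nu_B(y))$ parametrizes the product $(N(A)\cap\pi^{-1}U)\times s(N(B)\cap\pi^{-1}V)$, with a controlled orientation sign coming from the convention that $s|_{N(B)}$ is orientation reversing. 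The hypothesis $h\cdot\nu_B(y)=s(\nu_A(x))$ together with $h\circ s=s\circ h$ gives $h^{-1}\xi=-\nu_B(y)$ for $\xi=\nu_A(x)$, so $\Phi(x,y)=(\xi,h^{-1}\xi)$ is in the precise form to which Proposition \ref{fiber_integration} applies.

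With an orthonormal basis $e_1,\dots,e_{n-1}$ of $\xi^\perp$ (identified with $T_x\partial A$), the horizontal forms pull back as $\nu_A^*\theta_i(v)=\langle v,e_i\rangle$ and the vertical forms as $\nu_A^*\varphi_j(v)=\langle S_Av,e_j\rangle$ by the Weingarten identity $\nabla_v\nu_A=S_Av$. For the second slot, the key observation is that $d(kh)\colon T_yM\to T_xM$ is an isometry carrying $\nu_B(y)$ to $-\xi$, hence restricts to an orthogonal linear isomorphism $T_y\partial B\to\xi^\perp$. Because isometries preserve the horizontal/vertical decomposition \eqref{decomp}, one gets $(-\nu_B)^*(kh)^*\theta_i(w)=\langle d(kh)w,e_i\rangle$ and, using $\nabla_w(-\nu_B)=-S_Bw$, $(-\nu_B)^*(kh)^*\varphi_j(w)=-\langle d(kh)S_Bw,e_j\rangle$.

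To evaluate the wedge product, introduce the new co-frame $\tilde f^i=\langle d(kh)\,\cdot\,,e_i\rangle$ on $T_y\partial B$. In these coordinates the constituent $1$-forms take the clean shape $\pi_1^*\theta_i-\pi_2^*(kh)^*\theta_i=e^i-\tilde f^i$ and $\pi_1^*\varphi_j-\pi_2^*(kh)^*\varphi_j=(S_A)_{jk}e^k+(\tilde S_B)_{jk}\tilde f^k$, where $\tilde S_B=d(kh)\circ S_B\circ d(kh)^{-1}$. Reading off the $2(n-1)\times 2(n-1)$ matrix of these $1$-forms in the basis $(e^i,\tilde f^i)$ gives a block matrix
\[
\begin{pmatrix} I & -I \\ S_A & \tilde S_B\end{pmatrix},
\]
whose determinant, computed by the standard block formula, equals $\det(S_A+\tilde S_B)$. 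Inserting this into the fibre-integration formula \eqref{explicit} and reverting to the original frame $f^j$ introduces a factor $\det d(kh)|_{T_y\partial B\to\xi^\perp}$, which is $-1$ since $d(kh)$ is orientation preserving on $T_yM$ and sends $\nu_B(y)$ to $-\xi$.

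Integrating over $(\partial A\cap U)\times(\partial B\cap V)$ and combining the $-1$ from the orthogonal change of frame with the $-1$ from $s|_{N(B)}$ being orientation reversing produces the claimed positive expression \eqref{smooth}. I expect the main obstacle to be purely bookkeeping: verifying that the several sign contributions (the convention for the shape operator in $\nabla_v\nu=S_Av$, the orientation reversal of $s|_{N(B)}$, the sign $\det d(kh)|_{T_y\partial B\to\xi^\perp}=-1$, and the $(-1)^{n-1}$ that can arise from the block-determinant expansion) cancel exactly, so that $\cm(A,U,B,V)$ comes out with the right sign for locally convex sets. Once these signs are reconciled, the identity is immediate from the pullback computation above.
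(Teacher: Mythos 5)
Your proposal follows exactly the same route as the paper: parametrize $(N(A)\cap\pi^{-1}U)\times s(N(B)\cap\pi^{-1}V)$ by the two Gauss maps, pull back the explicit integral formula \eqref{explicit} from Proposition \ref{fiber_integration}, read off the horizontal and vertical $1$-forms using the Weingarten identity, and collect everything into the determinant of the block matrix $\begin{pmatrix} I & -I \\ S_A & d(kh)\,S_B\,d(kh)^{-1}\end{pmatrix}=\det\bigl(S_A+d(kh)\,S_B\,d(kh)^{-1}\bigr)$. The only difference is presentational: the paper absorbs the effect of $s$ into a sign change ($s^*\varphi_j=-\varphi_j$, giving the $+$ in the vertical factor) and passes through an intermediate block $\begin{pmatrix} \id & -k \\ S_A & d(kh)S_B\,dh^{-1}\end{pmatrix}$ before conjugating out $k$, whereas you track the orientation sign $\det\bigl(d(kh)|_{T_y\partial B\to\xi^\perp}\bigr)=-1$ and its cancellation against the orientation-reversal of $s|_{N(B)}$ more explicitly; both versions are equivalent.
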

The previous expression was already obtained by Teufel in \cite{teufel}. This confirms that  $\cm$ coincides, on smooth domains and up to normalization, with the contact measure introduced there.

\begin{proof} By \eqref{explicit}, the contact measure $\cm(A,\cdot,B,\cdot)$ is given by a differential form on $\partial A\times \partial B$ whose value at $(x,y)$ is
\begin{equation}\label{bigform}
(\nu_A\times s\nu_B)^*\cf=\frac{1}{n\omega_n}\int_{K_x}\bigwedge_i(\nu_A^*\theta_i-\nu_B^*(kh)^*\theta_i)\wedge \bigwedge_j(\nu_A^*\varphi_j+\nu_B^*(kh)^*\varphi_j) dk
\end{equation}
Putting  $\sigma_i=\langle \cdot, e_i\rangle$, we have 
\begin{align*}
 \nu_A^*\theta_i&=\sigma_i, &\nu_B^*(kh)^*\theta_i&=(kh)^*\sigma_i=\sum_j \sigma_i(k e_j)h^*\sigma_j,
\\
 \nu_A^*\varphi_i&=S_A^*\sigma_i, &\nu_B^*(kh)^*\varphi_i&=(k h S_B)^*\sigma_i=\sum_j \sigma_i(d(kh)\,S_B\,dh^{-1}\,e_j) h^*\sigma_j.&
\end{align*}
It follows that the integrand in \eqref{bigform} equals 
\begin{align*}
&\det \left(\begin{array}{cc}
 \id&-k\\S_A&d(kh)\,S_B\,dh^{-1}\end{array}\right)\bigwedge_i \sigma_i \wedge \bigwedge_j h^*\sigma_j
\\&=\det \left(\begin{array}{cc}
 \id&-\id\\S_A&d(kh)\,S_B\,d(kh)^{-1}\end{array}\right)dx\wedge dy\\
&=\det\left(S^A+d(kh) S^B d(kh)^{-1}\right)dx\wedge dy.
\end{align*}
\end{proof}

\begin{proposition}\label{trans} Let $(M,G)$ be a Riemannian isotropic space with isotropy group $H$, and let $(F,H\ltimes F)$ be the affine isotropic space with the same isotropy group. 
The respective contact measures $\cm$ and $\cm_0$ of $M$ and $F$ are identified with each other by
\[
\cm=(\tau\otimes\tau)\cm_0.
\]
\end{proposition}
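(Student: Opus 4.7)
My plan is to derive the identity from the explicit formula in Proposition \ref{fiber_integration}. That formula expresses $\kappa_{(\xi,h^{-1}\xi)}$ as a $K$-average of a wedge product built from the dual frame $(\theta_i,\varphi_j)$ attached to the Levi-Civita decomposition $T_\xi SM = T_{\pi\xi}M \oplus \xi^\bot$, together with its $(kh)^*$-pullbacks on the other factor. Under the isomorphism \eqref{identif_forms} these building blocks correspond exactly to the analogous forms $(\theta_i^0,\varphi_j^0)$ on $SF$: any element of $G$ is an isometry, so its differential preserves the decomposition \eqref{decomp} and acts on the frame through the standard $H$-representation on $T_{\pi\xi}M \cong F$, which is precisely how elements of $H\ltimes F$ act in the flat model. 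Since the isotropy group $K$ over which we average is the same in both cases, the identical formula produces $\cm_0$'s defining form on $SF\times SF$.

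To promote this pointwise matching to the tensor-product statement, I would expand the wedge product in \eqref{explicit} as a sum of pure tensors $\pi_1^*\omega\wedge\pi_2^*(kh)^*\omega'$, with $\omega,\omega'$ running over wedges of $\theta_i,\varphi_j$, and then carry out the $K$-integration. The key observation is that $\int_K (kh)^*\omega'\,dk$, viewed as a form on $SM$, is precisely the $G$-invariant extension of the $K$-invariant wedge $\omega'$ from $\xi$ to all of $SM$ — indeed, this is exactly how the isomorphism \eqref{identif_forms} is realized in one direction. Writing $\tilde\omega$ for this extension, we obtain $\kappa = \sum \pm\, \pi_1^*\tilde\omega \wedge \pi_2^*\tilde{\omega'}$ modulo forms that vanish on Legendrian product subspaces (irrelevant for integration against $N(A)\times s(N(B))$). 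The transfer map sends each $\tilde\omega$ to the corresponding $(H\ltimes F)$-invariant extension on $SF$, so the identical expansion on $SF\times SF$ gives $\cm_0 = \sum \pm\, \Phi_\omega\otimes\Phi_{\omega'}$ with exactly the same coefficients, and hence $\cm = \sum \pm\, \tau(\Phi_\omega)\otimes\tau(\Phi_{\omega'}) = (\tau\otimes\tau)\cm_0$.

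The main technical point to verify is that the $(kh)^*\theta_i$ and $(kh)^*\varphi_j$, defined at the point $h^{-1}\xi$ which varies with $h$, do combine after $K$-averaging into a $G$-invariant form on $SM\times SM$ that transfers cleanly to $(H\ltimes F)$-invariant data on $SF\times SF$. This reduces to two facts already in use in the paper: isometries preserve the Levi-Civita decomposition, so their action on the adapted frame is governed entirely by the standard $H$-representation; and the $K$-integration precisely absorbs the ambiguity in $h$ (which is determined modulo $K$ by the relative position $h^{-1}\xi$), leaving an object that depends only on the $H$-module structure on $T_{\pi\xi}M \cong F$ — which is the very data encoded by the transfer map $\tau$.
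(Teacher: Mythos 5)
Your argument follows the paper's route exactly: the proof there is a single sentence invoking Proposition \ref{fiber_integration} and the identification \eqref{identif_forms}, and that is precisely what you unpack in detail. One small imprecision worth noting: the individual wedges $\omega'$ in your expansion of \eqref{explicit} need not be $K$-invariant, so $\int_K (kh)^*\omega'\,dk$ is the $G$-invariant extension of the $K$-average of $\omega'$ rather than of $\omega'$ itself; this does not affect the conclusion, since the transfer map intertwines $K$-averages on both sides.
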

\begin{proof}Recalling that $\tau$ is induced by \eqref{identif_forms}, the relation follows from \eqref{explicit}.
\end{proof}

We are finally able to prove Theorem \ref{mainthm} which we restate as follows.

\begin{corollary} Let $(M,G)$ be an isotropic space with isotropy group $H$, and let $(F,H\ltimes F)$ be the affine isotropic space with the same isotropy group. The principal kinematic formula $k(\chi)$ of $M$ is related to the additive kinematic formula $A(S_{n-1})$ of the surface area measure of $F$ through 
\begin{equation}\label{link}
  (\delta\otimes\id) k(\chi)= (\id\otimes \glob)\circ\left( (\tau\circ i)\otimes(\tau\circ i\circ r)\right) A(S_{n-1}).
\end{equation}
\end{corollary}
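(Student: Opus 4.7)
The plan is to assemble the identity as a short chain of the three preceding results, since at this stage all the geometric content has already been extracted.

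First, I would rewrite the left-hand side using Corollary \ref{meaning}, which gives
\[
(\delta\otimes\id)\,k(\chi) \;=\; (\id\otimes\glob)\,\cm,
\]
so the problem reduces to expressing the contact measure $\cm$ of $(M,G)$ in terms of data from the companion affine isotropic space $(F,H\ltimes F)$.

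Next, I would apply Proposition \ref{trans}, the transfer principle for contact measures, to write
\[
\cm \;=\; (\tau\otimes\tau)\,\cm_0,
\]
where $\cm_0\in\curv\otimes\curv$ is the contact measure on $F$ and $\tau\colon\curv(F)^H\to\mathcal C(M)^G$ is the transfer map from \cite{bfs}. This step crucially uses the fact, proved in Proposition \ref{fiber_integration}, that the form $\cf$ is built entirely from $H$-invariant data on the tangent space, so it transfers between spaces with the same isotropy group.

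Finally, I would substitute the identification of $\cm_0$ with the local additive kinematic formula for the surface area measure given by the preceding proposition (equation \eqref{connection}):
\[
\cm_0 \;=\; \bigl(i\otimes(i\circ r)\bigr)\,A(S_{n-1}).
\]
Putting the three displays together yields
\[
(\delta\otimes\id)\,k(\chi) \;=\; (\id\otimes\glob)\,(\tau\otimes\tau)\,\bigl(i\otimes(i\circ r)\bigr)\,A(S_{n-1})
\;=\; (\id\otimes\glob)\circ\bigl((\tau\circ i)\otimes(\tau\circ i\circ r)\bigr)\,A(S_{n-1}),
\]
which is exactly \eqref{link}, and (since $\rho=\tau\circ i$) also the statement of Theorem \ref{mainthm}. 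There is no real obstacle at this stage; the only thing worth double-checking is that the tensor factors line up correctly with the involution $r$ applied on the second factor, which is dictated by the asymmetry in the definition of $\cm$ (where $s(N(B))$, rather than $N(B)$, appears on the second factor), matching the $-L,-V$ on the right-hand side of \eqref{akl}.
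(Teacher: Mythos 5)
Your proof is correct and follows exactly the same route as the paper's (which simply cites \eqref{variation}, \eqref{connection}, and Proposition \ref{trans}); you have just unfolded that one-line chain into its three displayed steps.
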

\begin{proof}This follows from \eqref{variation}, \eqref{connection} and Proposition \ref{trans}.
\end{proof}

Recalling the classification of isotropic spaces (cf. \cite[\S 4D]{tits}), one realizes that in most cases $r$ is superfluous in \eqref{link}. Indeed, except for the exceptional spaces $(\mathrm S^6,G_2), (\mathbb R\mathrm P^6,G_2), (\mathrm S^7, Spin(7))$ and $(\mathbb R\mathrm P^7,Spin(7))$, all the non-flat isotropic spaces are symmetric; i.e. the reflection about any point belongs to the isotropy group, in which case $r$ can be omitted in equation \eqref{link}. 

\section{Hermitian contact measures}\label{Hermitian}
As an application of Theorem \ref{mainthm} we will use \eqref{link} and the results of \cite{bfs} to determine the contact measure $\cm$, and equivalently the kinematic formula for the surface area measure $A(S_{2n-1})$, in Hermitian space $\mathbb C^n$. In fact, an algorithm to find such formulas was already given in \cite{wannerer.aim}, but our expressions are a bit more explicit. On the other hand, the results of \cite{wannerer.aim} include many invariant area measures that seem out of reach  by our approach.

Let $\mathrm{Area}^{U(n)}$ denote the space of $U(n)$-invariant area measures in $\mathbb C^n$. A basis of the vector space  $\mathrm{Area}^{U(n)}$ was  determined in \cite{wannerer.jdg} and consists of two distinct families $\{\Delta_{k,q}\}\cup\{N_{k,q}\}$ with the following range of indices: 
\begin{align*}
\Delta_{k,q},&\qquad 0,k-n \leq q\leq k/2\leq n\\
N_{k,q},&\qquad  0,k-n < q< k/2\leq n.
\end{align*}
We refer to \cite{wannerer.jdg} for the definition of these area measures. Their main property is the following. Let $\glob: \areaun\rightarrow \valun$ be the globalization map given by $\glob(\Theta)(A)=\Theta(A,S^{2n-1})$. Then 
\[
\glob(N_{k,q})=0,\qquad
\glob(\Delta_{k,q})=\mu_{k,q},
\]
where $\mu_{k,q}$ denote the so-called {\em Hermitian intrinsic volumes}, a basis of $\valun$ introduced in \cite{hig}. 

Another important family in $\areaun$ is 
\begin{align}
B_{k,q}&=\Delta_{k,q} + N_{k,q}, \qquad 0,k-n < q<k/2\leq n\\
B_{k,q}&=\Delta_{k,q},\qquad \max\{0,k-n\}=q\leq k/2\leq n.
\end{align}
For $k\neq 2q$ this coincides with  the notation used in \cite{bfs}, and \cite{wannerer.jdg}, but it must be noticed that $B_{2q,q}$ was instead denoted by $\Gamma_{2q,q}$ there. This little departure will  simplify some expressions.

It will be useful to consider the map $\ell_B:\valun\rightarrow \areaun$ given by $\ell_B(\mu_{k,q})=B_{k,q}$. Note that $\glob\circ\ell_B=\id$. Also, we will decompose the first variation operator $\delta_A:\valun\rightarrow \areaun$ introduced in \cite{wannerer.jdg}, as $\delta_A=\delta_B+\delta_N$ with $\delta_B=\ell_B\circ \glob\circ\delta$. Explicitly (cf. \cite[Proposition 4.6]{hig}),
\[
\delta_B(\mu_{k,q})=c_{n,k,q}\left(\frac{2(k-2q)}{c_{n,k-1,q}} B_{k-1,q}+\frac{q}{c_{n,k-1,q-1}}B_{k-1,q-1}\right)
\]
\[
\delta_N(\mu_{k,q})=c_{n,k,q}\left(\frac{(k-2q)^2(2n-k+1)}{c_{n,k-1,q}(n-k+q+1)}N_{k-1,q}-\frac{q(2n-k+1)}{c_{n,k-1,q-1}} N_{k-1,q-1}\right),
\]
where 
\[c_{n,k,q}=\frac{1}{q!(n-k+q)!(k-2q)!\omega_{2n-k}}.\]

With all this notation at hand, the kinematic formula for the surface area measure in $\Cn$ is explicitly
 given as follows. Recall that the principal kinematic formula $k(\chi)$ of $\Cn$ was computed in \cite{hig}.
\begin{theorem}\label{app}
The additive kinematic formula $A(S_{2n-1})$ for the surface area measure $S_{2n-1}$ in $(\mathbb C^n, U(n)\ltimes \mathbb C^n)$ is obtained from the principal kinematic formula $k(\chi)$ of this space by
\[
A(S_{2n-1})=(\delta_A\otimes \ell_B+\ell_B\otimes \delta_N) k(\chi).
\]
\end{theorem}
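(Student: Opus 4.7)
The strategy is to combine Theorem~\ref{mainthm}, specialized to the Hermitian setting, with the intrinsic symmetry of the additive kinematic operator and a vanishing statement for the $(N,N)$-component of $A(S_{2n-1})$. Since $\C^n$ is flat it coincides with its own affine model, so the transfer map $\tau$ is the identity on translation-invariant data; moreover $-\id\in U(n)$, hence the involution $r$ acts trivially on $\areaun$. Using $\delta|_{\valun}=i\circ\delta_A$ and the injectivity of $i$, equation~\eqref{link} collapses to
\[
(\delta_A\otimes\id)k(\chi)=(\id\otimes\glob_A)A(S_{2n-1}),
\]
where $\glob_A:=\glob\circ i:\areaun\to\valun$ satisfies $\glob_A(\Delta_{k,q})=\mu_{k,q}$ and $\glob_A(N_{k,q})=0$. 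I would then verify the symmetry $A(S_{2n-1})(K,U,L,V)=A(S_{2n-1})(L,V,K,U)$ by substituting $h\mapsto h^{-1}$ in the defining integral and using the unimodularity of $U(n)$ together with the $U(n)$-equivariance of $S_{2n-1}$. Combined with the symmetry of $k(\chi)$, this produces the transposed identity
\[
(\id\otimes\delta_A)k(\chi)=(\glob_A\otimes\id)A(S_{2n-1}).
\]

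Next I would exploit the splitting $\areaun=\ell_B(\valun)\oplus\ker\glob_A$, with associated projections $\pi_B=\ell_B\circ\glob_A$ and $\pi_N=\id-\pi_B$, yielding $A(S_{2n-1})=A_{BB}+A_{BN}+A_{NB}+A_{NN}$ for $A_{XY}:=(\pi_X\otimes\pi_Y)A(S_{2n-1})$. Applying $\id\otimes\ell_B$ to the first displayed identity and $\ell_B\otimes\id$ to the second, using $\glob_A\circ\ell_B=\id$ and $\glob_A\circ\delta_N=0$, gives
\[
(\id\otimes\pi_B)A(S_{2n-1})=(\delta_A\otimes\ell_B)k(\chi),\qquad (\pi_B\otimes\id)A(S_{2n-1})=(\ell_B\otimes\delta_A)k(\chi).
\]
Composing these with $\glob_A\otimes\id$ and then embedding via $\ell_B\otimes\ell_B$, and recalling $\delta_B=\ell_B\circ\glob\circ\delta$ together with $\glob_A\circ\delta_A=\glob\circ\delta$, produces $A_{BB}=(\delta_B\otimes\ell_B)k(\chi)$, which by the symmetry of $A(S_{2n-1})$ equals $(\ell_B\otimes\delta_B)k(\chi)$. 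Subtracting $A_{BB}$ from the two projected identities yields the remaining identifiable components
\[
A_{NB}=(\delta_N\otimes\ell_B)k(\chi),\qquad A_{BN}=(\ell_B\otimes\delta_N)k(\chi).
\]

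The main obstacle is to establish $A_{NN}=0$. The key observation is that the admissibility conditions on $(k,q)$ force $S_{2n-1}$ to be a scalar multiple of the unique degree-$(2n-1)$ basis element $\Delta_{2n-1,n-1}=B_{2n-1,n-1}$, since no $N_{2n-1,q}$ exists; in particular $S_{2n-1}\in\ell_B(\valun)$. I would then aim to prove that $A$ maps $\ell_B(\valun)$ into $\ell_B\valun\otimes\areaun+\areaun\otimes\ell_B\valun$, exploiting the compatibility of the additive kinematic operator with the $\valun$-module structure on $\areaun$ established by Wannerer in~\cite{wannerer.aim}: a ``globalizable'' area measure should produce no purely non-globalizable cross-terms in the additive formula. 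An alternative route is a direct verification using the explicit formula for $k(\chi)$ from~\cite{hig,bfs} together with the displayed formulas for $\delta_B$ and $\delta_N$, checking that the $(\pi_N\otimes\pi_N)$-part of the claimed right-hand side agrees (trivially, namely $0$) with that of $A(S_{2n-1})$.

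Granting $A_{NN}=0$, the theorem follows by summing the three surviving components:
\[
A(S_{2n-1})=A_{BB}+A_{BN}+A_{NB}=((\delta_B+\delta_N)\otimes\ell_B+\ell_B\otimes\delta_N)k(\chi)=(\delta_A\otimes\ell_B+\ell_B\otimes\delta_N)k(\chi).
\]
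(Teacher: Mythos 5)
Your reduction is essentially the paper's: both proofs use equation~\eqref{link} in the flat case (with $\tau=\id$, $r=\id$, and injectivity of $i$) to obtain $(\delta_A\otimes\id)k(\chi)=(\id\otimes\glob_A)A(S_{2n-1})$, both use the symmetry of $A(S_{2n-1})$, and both reduce the claim to the vanishing of the $(\mathcal N\otimes\mathcal N)$-component. Your bookkeeping via the projections $\pi_B$, $\pi_N$ is a clean rephrasing of the paper's argument, which instead forms the difference $D=A(S_{2n-1})-(\delta_A\otimes\ell_B+\ell_B\otimes\delta_N)k(\chi)$, shows $D\in\Sym^2\areaun$ and $(\id\otimes\glob)D=0$ to conclude $D\in\mathcal N\otimes\mathcal N$, and then invokes Proposition~\ref{noNN} to force $D=0$; the two bookkeeping schemes are logically interchangeable.

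The genuine gap is that you do not prove $A_{NN}=0$, which is the entire content of Proposition~\ref{noNN} and the only nontrivial step in the theorem. Your first suggested route --- that because $S_{2n-1}\in\ell_B(\valun)$ the operator $A$ should produce no purely non-globalizable cross-terms by compatibility with the $\valun$-module structure --- is not a statement available in \cite{wannerer.aim}, and it is not a priori clear why the image of a globalizing measure under $A$ should have vanishing $\mathcal N\otimes\mathcal N$-part; the operator $A$ is not determined by its behavior on $\ell_B(\valun)$, and the module structure by itself constrains neither the $\mathcal N$-components of $A(S_{2n-1})$ nor the way they pair. Your second route (direct computation) is not carried out. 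In the paper this vanishing is extracted by working over the whole family $\CPnlam$ of complex space forms: the flat globalization map $\glob_A$ kills all of $\mathcal N$ at once, but the curved globalization maps $g_\lambda=\glob\circ\tau_\lambda\circ i$ send $N_{k,q}$ to $-\lambda(q+1)\pi^{-1}\mu^\lambda_{k+2,q+1}$, so varying $\lambda$ detects the $\mathcal N\otimes\mathcal N$-coefficients. Proposition~\ref{noNN} then combines $(\delta_\lambda\otimes\id)k_\lambda(\chi)=(i\otimes g_\lambda)A(S_{2n-1})$ with the $\lambda$-independence of the coefficients of $k_\lambda(\chi)$, an expansion of $\delta_\lambda$ in powers of $\lambda$, and a limiting argument evaluating on complex totally geodesic submanifolds $P_\lambda$ (using \cite[Corollary 4.5]{ags}) to show first that $b_{2q,q,p}=0$ and then that all $c_{kqp}=0$. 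Without some argument of this kind --- or an independent one of comparable strength --- the proof is incomplete.
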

The idea of  the proof is to use \eqref{link}, and the principal kinematic formula $k_\lambda(\chi)$ of $\CPnlam$, the complex space form of constant holomorphic curvature $4\lambda$, computed in \cite{bfs}. Indeed, the globalization map from $\curvun$ to the space $\V_n^\lambda$ of isometry invariant valuations in $\CPnlam$ has a different kernel for each value of $\lambda$. In fact, these kernels are in direct sum, so the knowledge of $k_\lambda(\chi)$ for different $\lambda$ is enough to recover $A(S_{2n-1})$.

The proof of Theorem \ref{app} will be easy once we have established the following.
\begin{proposition}\label{noNN}
Let $\mathcal B=\mathrm{span}\{B_{kq}\}$ and $\mathcal N=\mathrm{span}\{N_{kq}\}$. Then
 \[
A(S_{2n-1})\in(\mathcal B\otimes\mathcal B)\oplus (\mathcal B\otimes\mathcal N)\oplus (\mathcal N\otimes\mathcal B)  
 \]
\end{proposition}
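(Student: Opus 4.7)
The plan is to combine the symmetry of $A(S_{2n-1})$ with Theorem~\ref{mainthm} applied in the family of complex space forms $\CPnlam$. The involution $r$ acts trivially on $\areaun$, because $-\id\in U(n)$ combined with $U(n)$-invariance gives $\Phi(-K,-U)=\Phi(K,U)$. Writing $A(S_{2n-1}) = A_{BB} + A_{BN} + A_{NB} + A_{NN}$ with respect to $\areaun = \mathcal{B} \oplus \mathcal{N}$, the substitution $h \mapsto h^{-1}$ in $\int_{U(n)} S_{2n-1}(K+hL, U\cap hV)\,dh$ (valid by unimodularity of $U(n)$ and $U(n)$-invariance of $S_{2n-1}$) shows that $A(S_{2n-1})(K,U,L,V)=A(S_{2n-1})(L,V,K,U)$, hence $A(S_{2n-1}) \in \Sym^2(\areaun)$ and $A_{BN} = \sigma A_{NB}$; the task reduces to showing $A_{NN}=0$.

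In the flat case ($M=\Cn$) Theorem~\ref{mainthm} reads $(\delta \otimes \id) k_0(\chi) = (i \otimes (\glob\circ i)) A(S_{2n-1})$. Since $\glob\circ i$ kills $\mathcal{N}$ and restricts to the inverse of $\ell_B$ on $\mathcal{B}$, applying $\id \otimes \ell_B$ gives $A_{BB} + A_{NB} = (\delta_A \otimes \ell_B) k_0(\chi)$, and by the symmetry above $A_{BB} + A_{BN} = (\ell_B \otimes \delta_A) k_0(\chi)$, so $A_{BB}, A_{BN}, A_{NB}$ are determined by $k_0(\chi)$. To access $A_{NN}$, I apply Theorem~\ref{mainthm} in $\CPnlam$ for $\lambda\neq 0$: writing $G_\lambda := \glob\circ \tau_\lambda : \curvun \to \V_n^\lambda$ and letting $\delta_A^\lambda : \V_n^\lambda\to\areaun$ be the map characterized by $\tau_\lambda^{-1}\delta = i\circ\delta_A^\lambda$ (well-defined because first variations of invariant valuations land in $i(\areaun)$), the theorem becomes
\[
(\delta_A^\lambda \otimes \id) k_\lambda(\chi) = (\id \otimes (G_\lambda\circ i)) A(S_{2n-1}).
\]
Both sides are polynomial in $\lambda$; at $\lambda=0$ the map $G_0\circ i=\glob\circ i$ kills $\mathcal{N}$, but higher-order terms of $G_\lambda\circ i$ act nontrivially on $\mathcal{N}$, and the family $\{G_\lambda\circ i\}_\lambda$ has trivial joint kernel, per the remark preceding the proposition.

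The main obstacle is the explicit matching in the last step: using the polynomial-in-$\lambda$ expressions for $k_\lambda(\chi)$ from \cite{bfs} and for $\delta_A^\lambda$, $G_\lambda$ from \cite{hig, bfs}, one expands both sides of the curved identity and extracts the component lying in $\mathcal{N}\otimes G_\lambda(i(\mathcal{N}))$ at each order of $\lambda$. Pragmatically, it suffices to verify that the ansatz $A_{NN}=0$, together with the $A_{BB}, A_{BN}, A_{NB}$ determined in the flat case, yields an element consistent with the curved equation for every $\lambda$; the trivial joint kernel of the family $\{G_\lambda\circ i\}$ then forces $A_{NN}=0$, completing the proof.
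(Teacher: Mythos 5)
Your setup matches the paper's: you use the symmetry of $A(S_{2n-1})$, the triviality of $r$ on $\areaun$, and the family of curved identities from Theorem~\ref{mainthm} in $\CPnlam$. You also correctly observe that the components $A_{BB}$, $A_{BN}$, $A_{NB}$ are pinned down by the flat case together with symmetry, and that the injectivity statement (trivial joint kernel of $\{g_\lambda\}$) would yield uniqueness. So far this is essentially the paper's strategy.

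However, the heart of the proof is exactly the step you defer under ``pragmatically, it suffices to verify,'' and there is a genuine obstacle there that your outline does not anticipate. When one expands $(\id\otimes g_\lambda)A(S_{2n-1})$ modulo $\mathcal B\otimes\V_n^\lambda$, the $\mathcal N$-contribution to the second factor does \emph{not} come only from $g_\lambda(\mathcal N)$: the element $g_\lambda(B_{2q,q}) = \mu_{2q,q}^\lambda - \lambda\,\frac{q+1}{\pi}\,\mu_{2q+2,q+1}^\lambda$ carries a linear-in-$\lambda$ term landing in the same span $\{\mu_{k+2,q+1}^\lambda\}$ hit by $g_\lambda(N_{k,q})$. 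Consequently, extracting the order-$\lambda$ coefficient gives a linear relation mixing the $b_{2q,q,p}$ with the $c_{kqp}$, not an identity forcing $c_{kqp}=0$ by itself. The paper resolves this by a separate argument: evaluating $\lambda^q(\id\otimes g_\lambda)A(S_{2n-1})(\cdot,P_\lambda)$ on totally geodesic complex $q$-planes $P_\lambda$ and letting $\lambda\to 0$, then invoking Corollary 4.5 of \cite{ags} (which says $(\delta_\lambda\otimes\id)k_\lambda(\chi)(\cdot,P_\lambda)\in\mathcal B$) to conclude $b_{2q,q,p}=0$. Only after this cancellation does the comparison of $\lambda$-coefficients in the curved identity isolate the $c_{kqp}$ and show they vanish. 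Without that intermediate step your ansatz-consistency check would not close, so the proposal has a real gap rather than merely an omitted routine computation.

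A smaller issue: ``higher-order terms of $G_\lambda\circ i$'' treats $G_\lambda\circ i$ as a power series in $\lambda$, but the target $\V_n^\lambda$ is a different space for each $\lambda$. This is not fatal — the paper finesses it by working with the explicit basis $\{\mu^\lambda_{k,q}\}$ and constants $d_{k,q,p}$ independent of $\lambda$ — but it needs to be made precise for the coefficient extraction you propose to be meaningful.
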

\begin{proof} 
Since clearly $A(S_{2n-1})\in \Sym^2(\areaun)$, let us write 
\[
 A(S_{2n-1})=\sum_{k,q,p} a_{kqp} B_{k,q}\otimes B_{k',p}+\sum_{k,q,p} b_{kqp} B_{k,q}\odot N_{k',p}+\sum_{k,q,p} c_{kqp} N_{k,q}\otimes N_{k',p},
\]
where $\odot$ denotes  the symmetric product and $k'=2n-k-1$. We need to show that all $c_{kqp}=0$.

Let  $\tau_\lambda$ be the transfer map from $\curvun$ to the space $\mathcal C_n^\lambda$ of invariant curvature measures in $\CPnlam$, and let 
\[g_\lambda\colon\areaun\stackrel{i}{\longrightarrow}\curvun\stackrel{\tau_\lambda}{\longrightarrow}\mathcal C_n^\lambda\stackrel{\glob}{\longrightarrow} \V_n^\lambda\]
that is $g_\lambda(\Phi)(P)=\tau_\lambda(i(\Phi))(P,P)$. In terms of the basis $\{\mu_{k,q}^\lambda\}$ of $\V_n^\lambda$ used in \cite{bfs}, the map $g_\lambda$ is  given by (cf. \cite[Lemma 3.3, Definition 3.7, and Lemma 3.9]{bfs})
\begin{align*}
 g_\lambda(N_{k,q})&=-\lambda\frac{q+1}{\pi}\mu_{k+2,q+1}^\lambda,\\
g_\lambda(B_{k,q})&=\mu_{k,q}^\lambda,\qquad k\neq 2q\\
g_\lambda(B_{2q,q})&=\mu_{2q,q}^\lambda-\lambda\frac{q+1}{\pi}\mu_{2q+2,q+1}^\lambda.
\end{align*}
Hence, modulo $\mathcal B\otimes\V_n^\lambda$,
\begin{align}\notag
 (\id\otimes g_\lambda) A(S_{2n-1})&\equiv 
\sum_{k,q,p} b_{kqp} N_{k',p}\otimes\mu^\lambda_{k,q}-\frac{\lambda}{\pi}\sum_{q,p}(q+1) b_{2q,q,p} N_{2n-2q-1,p}\otimes \mu_{2q+2,q+1}^\lambda\\\label{AS}
& -\frac{\lambda}{\pi}\sum_{k,q,p} c_{kqp}(p+1) N_{k,q}\otimes \mu^\lambda_{k'+2,p+1}.
\end{align}

For $\lambda>0$ and $0\leq q\leq n$, let $P_\lambda$ be a totally geodesic complex subspace of $\CPnlam$ of complex dimension $q$.
Since (see the proof of Propostion 5.2 in \cite{bfs})
\[
\mu_{k,p}^\lambda(P_\lambda)=\delta_{k,2q}\delta_{p,q}\frac{\pi^q}{\lambda^qq!},
\]
one has
\begin{equation}\label{lim}
\lim_{\lambda\to 0}\lambda^q(\id\otimes g_\lambda) A(S_{2n-1})(\cdot,P_\lambda)\equiv\frac{\pi^{q}}{q!}\sum_p b_{2q,q,p}N_{2n-2q-1,p} 
\end{equation}
modulo $\mathcal B$. 

We denote by $\delta_\lambda:\V_n^\lambda\rightarrow \curvun$ the first variation operator in $\CPnlam$ composed with $\tau_\lambda^{-1}$. 
By \eqref{link}, since $r=\id$ on $\areaun$ as $-\id\in\mathrm U(n)$,
\begin{equation}\label{d_lambda_k}
 (\delta_\lambda\otimes\id)k_\lambda(\chi)=(i\otimes g_\lambda) A(S_{2n-1}).
\end{equation}
However, by  \cite[Corollary 4.5]{ags}, we know that
\begin{equation}\label{lemags}
  (\delta_\lambda\otimes\id)k_\lambda(\chi)(\cdot,P_\lambda)\in\mathcal B.
\end{equation}
Comparing \eqref{d_lambda_k},\eqref{lim} and \eqref{lemags} shows that $b_{2q,q,p}=0$ for all $q,p$.

On the other hand, the principal kinematic formula  in $\CPnlam$ has the form 
\[
k_\lambda(\chi)=\sum d_{k,q,p}\mu_{k,q}^\lambda\otimes\mu_{k',p}^\lambda,
\]
for certain constants $d_{k,q,p}$  independent of $\lambda$ (cf. Theorem 3.19 \cite{bfs}). Also $\delta_\lambda$ is known explicitly and has the form (cf. \cite[Proposition 3.7]{ags})
\begin{align}\label{d_lambda}
\delta_\lambda(\mu_{k,q}^\lambda)&=\delta_0(\mu_{k,q})+\lambda e_{k,q} i(B_{k+1,q})+\lambda f_{k,q}i(B_{k+1,q+1}).\notag
\end{align} for some $e_{k,q},f_{k,q}$. Together with \eqref{d_lambda_k}, the last two relations give
\begin{equation}\label{dk}
(i \otimes g_\lambda) A(S_{2n-1})=\sum d_{k,q,p}\delta_0(\mu_{k,q})\otimes\mu_{k',p}^\lambda+\lambda\sum d'_{k,q,p}i(B_{k+1,q})\otimes\mu_{k',p}^\lambda
\end{equation}
for certain constants $d_{k,q,p}'$.

By comparing the coefficients of $\lambda$ in \eqref{AS} and \eqref{dk}, and recalling that $b_{2q,q,p}=0$, we see that all $c_{n,k,q}=0$, as claimed. 
\end{proof}

\begin{proof}[Proof of Theorem \ref{app}]Let us denote $D=A(S_{2n-1})-(\delta_A\otimes \ell_B+\ell_B\otimes \delta_N) k(\chi)$. We noted before that $A(S_{2n-1})\in\Sym^2\areaun$. Also, by \eqref{connection}
\[
 (\delta_B\otimes\ell_B)k(\chi)=(\ell_B\otimes\ell_B)\circ (\glob\circ\delta\otimes\id) k(\chi)=(\ell_B\circ\glob\circ i)\otimes (\ell_B\circ\glob\circ i)A(S_{2n-1})
\] is symmetric.
This shows that $D\in \Sym^2\areaun$. Note also that, by \eqref{link}
\[
 (\id\otimes\glob)D=0.
\]
Hence, 
\[
 D\in \ker(\id\otimes\glob)\cap \Sym^2\areaun\subset \mathcal N\otimes \mathcal N.
\]
But $D\in(\mathcal B\otimes\mathcal B)\oplus (\mathcal B\otimes\mathcal N)\oplus (\mathcal N\otimes\mathcal B)$  by Proposition \ref{noNN}, so $D=0$.
\end{proof}

\end{document}